\titleformat{\subsection}{\it}{\thesubsection.\enspace}{1pt}{}
\newtheorem{theo}{Theorem}[section]
\newtheorem{lemm}[theo]{Lemma}
\newtheorem{defi}[theo]{Definition}
\newtheorem{coro}[theo]{Corollary}
\newtheorem{prop}[theo]{Proposition}
\newtheorem{rema}[theo]{Remark}
\numberwithin{equation}{section}
\begin{document}
\title{Blow-up phenomena and local well-posedness for a generalized Camassa-Holm equation with peakon solution
\hspace{-4mm}
}

\author{Xi $\mbox{Tu}^1$\footnote{E-mail: tuxi@mail2.sysu.edu.cn} \quad and\quad
 Zhaoyang $\mbox{Yin}^{1,2}$\footnote{E-mail: mcsyzy@mail.sysu.edu.cn}\\
 $^1\mbox{Department}$ of Mathematics,
Sun Yat-sen University,\\ Guangzhou, 510275, China\\
$^2\mbox{Faculty}$ of Information Technology,\\ Macau University of Science and Technology, Macau, China}

\date{}
\maketitle
\hrule

\begin{abstract}
In this paper we mainly study the Cauchy problem for a generalized Camassa-Holm equation. First, by using the Littlewood-Paley decomposition and transport equations theory, we establish the local well-posedness for the Cauchy problem of the equation in Besov spaces. Then we give a blow-up criterion for the Cauchy problem of the equation. we present a blow-up result and the exact blow-up rate of strong solutions to the equation by making use of the conservation law and the obtained blow-up criterion. Finally, we verify that the system possesses peakon solutions.

\vspace*{5pt}
\noindent {\it 2000 Mathematics Subject Classification}: 35Q53, 35A01, 35B44, 35B65.\\
\vspace*{5pt}
\noindent{\it Keywords}: A generalized Camassa-Holm equation; Local well-posedness; Besov spaces; Blow-up.
\\\end{abstract}

\vspace*{10pt}

\tableofcontents

\section{Introduction}
In this paper we consider the Cauchy problem for the following generalized Camassa-Holm equation,
\begin{align}\label{E1}
\left\{
\begin{array}{ll}
u_t-u_{txx}=\partial_x(2+\partial_x)[(2-\partial_x)u]^2,~~~~  t>0,\\[1ex]
u(0,x)=u_{0}(x),
\end{array}
\right.
\end{align}
which can be rewritten as
\begin{align}\label{E2}
\left\{
\begin{array}{ll}
m=u-u_{xx},\\[1ex]
m_t=2m^2+(8u_x-4u)m+(4u-2u_x)m_x+2(u+u_x)^2, ~~ t>0,\\[1ex]
m(0,x)=u(0,x)-u_{xx}(0,x)=m_0(x).
\end{array}
\right.
\end{align}
The equation (\ref{E1}) was proposed recently by Novikov in \cite{n1}. It is integrable and belongs to the following class \cite{n1}:
\begin{align}\label{E02}
(1-\partial^2_x)u_t=F(u,u_x,u_{xx},u_{xxx}),
\end{align}
which has attracted much interest, particularly in the possible integrable members of (\ref{E02}).

The most celebrated integrable member of (\ref{E02}) is the well-known Camassa-Holm (CH) equation \cite{Camassa}:
\begin{align}
(1-\partial^2_x)u_t=3uu_x-2u_{x}u_{xx}-uu_{xxx}.
\end{align}
The CH equation can be regarded as a shallow water wave equation \cite{Camassa, Constantin.Lannes}.  It is completely integrable \cite{Camassa,Constantin-P,Constantin.mckean}.
 Integrability is not a straightforward concept, and this provides good background material on this important aspect of
the CH-equation.
It also has a bi-Hamiltonian structure \cite{Constantin-E,Fokas}, and admits exact peaked solitons of the form $ce^{-|x-ct|}$ with $c>0$, which are orbitally stable \cite{Constantin.Strauss}. It is worth mentioning that the peaked solitons present the characteristic for the traveling water waves of greatest height and largest amplitude and arise as solutions to the free-boundary problem for incompressible Euler equations over a flat bed, cf. \cite{Camassa.Hyman,Constantin2,Constantin.Escher4,Constantin.Escher5,Toland}.

The local well-posedness for the Cauchy problem of the CH equation in Sobolev spaces and Besov spaces was discussed in \cite{Constantin.Escher,Constantin.Escher2,d1,Guillermo}. It was shown that there exist global strong solutions to the CH equation \cite{Constantin,Constantin.Escher,Constantin.Escher2} and finite time blow-up strong solutions to the CH equation \cite{Constantin,Constantin.Escher,Constantin.Escher2,Constantin.Escher3}. The existence and uniqueness of global weak solutions to the CH equation were proved in \cite{Constantin.Molinet, Xin.Z.P}. The global conservative and dissipative solutions of CH equation were investigated in \cite{Bressan.Constantin,Bressan.Constantin2}.

The second celebrated integrable member of (\ref{E02}) is the famous Degasperis-Procesi (DP) equation \cite{D-P}:
\begin{align}
(1-\partial^2_x)u_t=4uu_x-3u_{x}u_{xx}-uu_{xxx}.
\end{align}
The DP
equation can be regarded as a model for nonlinear shallow water
dynamics and its asymptotic accuracy is the same as for the
CH shallow water equation \cite{D-G-H}. The DP equation is integrable and has a bi-Hamiltonian structure \cite{D-H-H}. An inverse scattering approach for the
DP equation was presented in \cite{Constantin.lvanov.lenells,Lu-S}. Its
traveling wave solutions was investigated in \cite{Le,V-P}. \par The
local well-posedness of the Cauchy problem of the DP equation in Sobolev spaces and Besov spaces was established in
\cite{G-L,H-H,y1}. Similar to the CH equation, the
DP equation has also global strong solutions
\cite{L-Y1,y2,y4} and finite time blow-up solutions
\cite{E-L-Y1, E-L-Y,L-Y1,L-Y2,y1,y2,y3,y4}. On the other hand, it has global weak
solutions \cite{C-K,E-L-Y1,y3,y4}.
\par
Although the DP equation is similar to the
CH equation in several aspects, these two equations are
truly different. One of the novel features of the DP
different from the CH equation is that it has not only
peakon solutions \cite{D-H-H} and periodic peakon solutions
\cite{y3}, but
also shock peakons \cite{Lu} and the periodic shock waves \cite{E-L-Y}.

The third celebrated integrable member of (\ref{E02}) is the known Novikov equation \cite{n1}:
\begin{align}
(1-\partial^2_x)u_t=3uu_{x}u_{xx}+u^2u_{xxx}-4u^2u_x.
\end{align}
The most difference between the Novikov equation and the CH and DP equations is that the former one has cubic nonlinearity and the latter ones have quadratic nonlinearity.

It was showed that the Novikov equation is integrable, possesses a bi-Hamiltonian structure, and admits exact peakon solutions $u(t,x)=\pm\sqrt{c}e^{|x-ct|}$ with $c>0$ \cite{Hone}.\\
$~~~~~~$ The local well-posedness for the Novikov equation in Sobolev spaces and Besov spaces was studied in \cite{Wu.Yin2,Wu.Yin3,Wei.Yan,Wei.Yan2}. The global existence of strong solutions was established in \cite{Wu.Yin2} under some sign conditions and the blow-up phenomena of the strong solutions were shown in \cite{Wei.Yan2}. The global weak solutions for the Novikov equation were studied in \cite{Laishaoyong,Wu.Yin}.\\

To our best knowledge, the Cauchy problem of (\ref{E1}) has not been studied yet. In this paper we first investigate the local well-posedness of (\ref{E2}) with initial data in Besov spaces $B^s_{p,r}$ with $s>\max\{\frac{1}{2},\frac{1}{p}\}$. The main idea is based on the Littlewood-Paley theory and transport equations theory. Then, we prove a blow-up criterion with the help of the Kato-Ponce commutator estimate. By virtue of conservation laws, we obtain a blow-up result. Finally, we conclude the exact blow-up rate of strong solutions to (\ref{E1}). Finally, we verify new peakon solutions of (\ref{E1}) in distributional sense.

The paper is organized as follows. In Section 2 we introduce some preliminaries which will be used in sequel. In Section 3 we prove the local well-posedness of (\ref{E1}) by using Littlewood-Paley and transport equations theory. In section 4, we are committed to the study of blow-up phenomena of (\ref{E1}). Taking advantage of a conservation law and a prior estimates, we derive a blow-up criterion, a blow-up result and the exact blow-up rate of strong solutions to (\ref{E1}). The last section is devoted to the study of the equation (\ref{E1}) possessing a class of peakon solutions.

\vspace*{2em}
\section{Preliminaries}
In this section, we first recall the Littlewood-Paley decomposition and Besov spaces (for more details to see \cite{B.C.D}).
Let $\mathcal{C}$ be the annulus $\{\xi\in\mathbb{R}^{d}\big|\frac{3}{4}\leq|\xi|\leq\frac{8}{3}\}.$ There exist radial functions $\chi$ and $\varphi$, valued in the interval $[0,1]$, belonging respectively to $\mathcal{D}(B(0,\frac{4}{3}))$ and $\mathcal{D}(\mathcal{C})$, and such that $$\forall\xi\in\mathbb{R}^{d},~\chi(\xi)+\sum_{j\geq0}\varphi(2^{-j}\xi)=1,$$

$$|j-j'|\geq2\Rightarrow Supp ~\varphi(2^{-j}\xi)\cap Supp ~\varphi(2^{-j'}\xi)=\emptyset,$$

$$j\geq1\Rightarrow Supp ~\chi(\xi)\cap Supp ~\varphi(2^{-j'}\xi)=\emptyset.$$
  Define the set $\widetilde{\mathcal{C}}=B(0,\frac{2}{3})+\mathcal{C}$. Then we have
$$|j-j'|\geq5\Rightarrow 2^{j'}\widetilde{\mathcal{C}}\cap 2^{j}\mathcal{C}=\emptyset.$$
Further, we have
 $$\forall\xi\in\mathbb{R}^{d},~\frac{1}{2}\leq\chi^{2}(\xi)+\sum_{j\geq0}\varphi^{2}(2^{-j}\xi)\leq1.$$

Denote $\mathcal{F}$ by the Fourier transform and $\mathcal{F}^{-1}$ by its inverse. From now on, we write $h=\mathcal{F}^{-1}\varphi$ and $\widetilde{h}=\mathcal{F}^{-1}\chi$.
The nonhomogeneous dyadic blocks $\Delta_{j}$ are defined by
$$\Delta_{j}u=0~~~ if~~~ j\leq-2,~~~\Delta_{-1}u=\chi(D)u=\int_{\mathbb{R}^{d}}\widetilde{h}(y)u(x-y)dy,$$
$$ and,~~~\Delta_{j}u=\varphi(2^{-j}D)u=2^{jd}\int_{\mathbb{R}^{d}}h(2^{j}y)u(x-y)dy ~~~if~~ j\geq0,$$
$$S_{j}u=\sum_{j'\leq j-1}\Delta_{j'}u.$$
The nonhomogeneous Besov spaces are denoted by $B^{s}_{p,r}(\mathbb{R}^d)$
$$B^{s}_{p,r}=\big\{u\in S'\big{|}\|u\|_{B^{s}_{p,r}(\mathbb{R}^d)}=(\sum_{j\geq-1}2^{rjs}\|\Delta_{j}u\|^{r}_{L^{p}(\mathbb{R}^d)})^{\frac{1}{r}}<\infty\big\}.$$
Next we introduce some useful lemmas and propositions about Besov spaces which will be used in the sequel.

\begin{prop}\label{2}
\cite{B.C.D} Let $1\leq p_{1} \leq p_{2} \leq \infty$ and $1\leq r_{1} \leq r_{2} \leq \infty$, and let $s$ be a real number. Then we have
$$B^{s}_{p_{1},r_{1}}(\mathbb{R}^d)\hookrightarrow B^{s-d(\frac{1}{p_{1}}-\frac{1}{p_{2}})}_{p_{2},r_{2}}(\mathbb{R}^d).$$
If $s>\frac{d}{p}~or ~s=\frac{d}{p},~r=1$, we then have $$B^{s}_{p,r}(\mathbb{R}^d)\hookrightarrow L^{\infty}(\mathbb{R}^d).$$
\end{prop}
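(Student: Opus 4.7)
The proof breaks naturally into two independent embeddings, and both rest on Bernstein's inequality applied to the dyadic blocks $\Delta_j u$, which are spectrally localized in balls of size $2^j$ (with $\Delta_{-1} u$ localized in a fixed ball). The plan is to first pass from $L^{p_1}$ to $L^{p_2}$ block by block, losing a factor $2^{jd(1/p_1-1/p_2)}$, and then pass from $\ell^{r_1}$ to $\ell^{r_2}$ in the dyadic index using a standard sequence-space embedding.

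For the first assertion, I would apply Bernstein's lemma to each nonhomogeneous dyadic block to get
\[
\|\Delta_j u\|_{L^{p_2}} \lesssim 2^{jd(1/p_1-1/p_2)}\|\Delta_j u\|_{L^{p_1}}, \qquad j\geq -1,
\]
(note that for $j=-1$ the dyadic block is localized in a fixed ball, so the same estimate holds with constant independent of $j$). Multiplying by $2^{j(s-d(1/p_1-1/p_2))}$ yields $2^{j(s-d(1/p_1-1/p_2))}\|\Delta_j u\|_{L^{p_2}}\lesssim 2^{js}\|\Delta_j u\|_{L^{p_1}}$. Taking the $\ell^{r_2}$ norm on the left and using the embedding $\ell^{r_1}(\mathbb{N})\hookrightarrow \ell^{r_2}(\mathbb{N})$ for $r_1\leq r_2$ on the right finishes this step.

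For the second assertion, I would decompose $u=\sum_{j\geq -1}\Delta_j u$ and estimate each term in $L^\infty$ via Bernstein:
\[
\|\Delta_j u\|_{L^\infty} \lesssim 2^{jd/p}\|\Delta_j u\|_{L^p}
= 2^{-j(s-d/p)} \cdot 2^{js}\|\Delta_j u\|_{L^p}.
\]
When $s=d/p$ and $r=1$, summation over $j$ is immediate and gives $\|u\|_{L^\infty}\lesssim \|u\|_{B^{d/p}_{p,1}}$. When $s>d/p$, I would apply Hölder's inequality with conjugate exponents $r$ and $r'$ in the $j$-variable; the geometric series $\sum_{j\geq -1}2^{-j(s-d/p)r'}$ converges since $s-d/p>0$, and the remaining factor is exactly $\|u\|_{B^s_{p,r}}$.

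Neither step presents a real obstacle; the only point requiring a little care is the treatment of the low-frequency block $\Delta_{-1}u$, for which the "negative-power" weight $2^{-j(s-d/p)}$ is actually harmless (it is just a constant). The whole proof is essentially an exercise in Bernstein's inequality plus a discrete Hölder, and in fact this is precisely how the result is proved in \cite{B.C.D}.
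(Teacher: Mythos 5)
Your argument is correct: the paper offers no proof of this proposition (it is quoted verbatim from the cited reference \cite{B.C.D}), and your Bernstein-inequality argument block by block, combined with $\ell^{r_1}\hookrightarrow\ell^{r_2}$ for the first embedding and discrete H\"older (or direct summation when $r=1$) for the second, is precisely the standard proof given there. The one point you flag — the low-frequency block $\Delta_{-1}u$ being supported in a fixed ball — is handled correctly.
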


\begin{lemm}\label{3}\cite{B.C.D}
A constant C exists which satisfies the following properties.
If $s_1$ and $s_2$ are real numbers such that $s_1<s_2$ and $\theta \in (0, 1)$, then we
have, for any $(p,r)\in [1,\infty]^2$ and $u\in\mathcal{S}_{h}',$
\begin{align}
&\|u\|_{B_{p,r}^{\theta s_1+(1-\theta)s_2}}\leq\|u\|^{\theta}_{B_{p,r}^{s_1}}
\|u\|^{(1-\theta)}_{B_{p,r}^{s_2}}~~~~~~and\\&
\|u\|_{B_{p,1}^{\theta s_1+(1-\theta)s_2}}\leq\frac{C}{s_2-s_1}(\frac{1}{\theta}+\frac{1}{1-\theta})\|u\|^{\theta}_{B_{p,\infty}^{s_1}}
\|u\|^{(1-\theta)}_{B_{p,\infty}^{s_2}}.
\end{align}
\end{lemm}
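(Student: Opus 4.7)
The plan is to derive both estimates from discrete interpolation applied to the dyadic sequence $a_j := 2^{js}\|\Delta_j u\|_{L^p}$, where $s := \theta s_1 + (1-\theta)s_2$, exploiting the elementary pointwise factorization
\[
a_j = \bigl(2^{js_1}\|\Delta_j u\|_{L^p}\bigr)^{\theta}\bigl(2^{js_2}\|\Delta_j u\|_{L^p}\bigr)^{1-\theta}.
\]
For the first inequality this is immediate: H\"older on $\ell^r$ with conjugate exponents $1/\theta$ and $1/(1-\theta)$ applied to the factorization above, followed by taking $\ell^r$ norms on both sides, produces $\|u\|_{B^{s}_{p,r}}\leq\|u\|^{\theta}_{B^{s_1}_{p,r}}\|u\|^{1-\theta}_{B^{s_2}_{p,r}}$ with constant one, and no subtlety arises.

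For the second inequality H\"older alone is insufficient, since the left-hand norm is $\ell^1$ while the right-hand ones are $\ell^\infty$; a summation has to be introduced by hand. The approach I would take is to split the Besov norm at an integer cutoff $N$ (to be optimized at the end), and on each piece insert the defining pointwise control $\|\Delta_j u\|_{L^p}\leq 2^{-js_i}\|u\|_{B^{s_i}_{p,\infty}}$, using $i=1$ on the low-frequency block $j\leq N$ and $i=2$ on the high-frequency block $j>N$. Because $s-s_1=(1-\theta)(s_2-s_1)>0$ and $s-s_2=-\theta(s_2-s_1)<0$, the two resulting geometric series both converge, each contributing a factor of the form $\bigl(1-2^{-\alpha(s_2-s_1)}\bigr)^{-1}$ with $\alpha\in\{\theta,1-\theta\}$, which is bounded by $C/(\alpha(s_2-s_1))$ via the elementary estimate $1-2^{-x}\gtrsim x$ on bounded intervals.

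The main---and essentially only---nontrivial step is the optimization of $N$. I would choose $N\in\mathbb{Z}$ so that $2^{N(s_2-s_1)}$ is comparable to $\|u\|_{B^{s_2}_{p,\infty}}/\|u\|_{B^{s_1}_{p,\infty}}$, which balances the two surviving boundary contributions $2^{N(1-\theta)(s_2-s_1)}\|u\|_{B^{s_1}_{p,\infty}}$ and $2^{-N\theta(s_2-s_1)}\|u\|_{B^{s_2}_{p,\infty}}$, each of them collapsing to the interpolated product $\|u\|^{\theta}_{B^{s_1}_{p,\infty}}\|u\|^{1-\theta}_{B^{s_2}_{p,\infty}}$. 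Rounding $N$ to the nearest integer costs only a multiplicative constant absorbed in $C$, and the degenerate cases (one of the norms zero or infinite) are disposed of by sending $N\to\pm\infty$ in the appropriate direction. Summing the two balanced contributions yields exactly the announced prefactor $\frac{C}{s_2-s_1}\bigl(\frac{1}{\theta}+\frac{1}{1-\theta}\bigr)$, completing the scheme.
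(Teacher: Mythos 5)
The paper does not prove this lemma at all; it is quoted verbatim from the reference [B.C.D] (Proposition 2.22 there), so there is no in-paper argument to compare yours against. Your reconstruction is precisely the standard proof from that reference: the first inequality is the pointwise factorization $2^{js}\|\Delta_j u\|_{L^p}=(2^{js_1}\|\Delta_j u\|_{L^p})^{\theta}(2^{js_2}\|\Delta_j u\|_{L^p})^{1-\theta}$ followed by H\"older in $\ell^r$ with exponents $1/\theta$ and $1/(1-\theta)$ (constant $1$, and the case $r=\infty$ is trivial), and the second is the split of the $\ell^1$ sum at a frequency $N$ chosen so that $2^{N(s_2-s_1)}$ is comparable to $\|u\|_{B^{s_2}_{p,\infty}}/\|u\|_{B^{s_1}_{p,\infty}}$. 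Both steps are sound and this is the intended argument.

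One genuine wrinkle, located exactly at your sentence claiming the geometric-series factor $(1-2^{-\alpha(s_2-s_1)})^{-1}$ is bounded by $C/(\alpha(s_2-s_1))$ "via the elementary estimate $1-2^{-x}\gtrsim x$ on bounded intervals": the quantity $x=\alpha(s_2-s_1)$ is not a priori confined to a bounded interval, and for large $x$ one only has $(1-2^{-x})^{-1}\le C\max(1,1/x)$, since the left-hand side tends to $1$. Carrying this through, the argument delivers the prefactor $C\bigl(1+\frac{1}{\theta(1-\theta)(s_2-s_1)}\bigr)$ rather than the stated $\frac{C}{s_2-s_1}\bigl(\frac{1}{\theta}+\frac{1}{1-\theta}\bigr)=\frac{C}{\theta(1-\theta)(s_2-s_1)}$, and the additive $1$ cannot be removed by any proof: for $u$ with a single nonzero dyadic block all the Besov norms involved coincide, so the stated prefactor would have to be at least $1$, which fails once $s_2-s_1>C/(\theta(1-\theta))$. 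This is a defect of the statement as copied from the reference rather than of your scheme, and it is immaterial for the paper, which only invokes the lemma with $s_2-s_1$ of order one (where the two forms of the constant are equivalent). Apart from making that caveat explicit, your proof is complete.
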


\begin{coro}\label{est3}
\cite{B.C.D} For any positive real number $s$ and any $(p, r)$ in $[1,\infty]^{2}$, the
space $L^{\infty}(\mathbb{R}^d)\cap B^{s}_{p,r}(\mathbb{R}^d)$ is an algebra, and a constant $C$ exists such that
$$\|uv\|_{B^{s}_{p,r}(\mathbb{R}^d)}\leq C(\|u\|_{L^{\infty}(\mathbb{R}^d)}\|v\|_{B^{s}_{p,r}(\mathbb{R}^d)}+\|u\|_{B^{s}_{p,r}(\mathbb{R}^d)}\|v\|_{L^{\infty}(\mathbb{R}^d)}).$$
If $s>\frac{d}{p}$ or $s=\frac{d}{p},~r=1$, then we have
$$\|uv\|_{B^{s}_{p,r}(\mathbb{R}^d)}\leq C\|u\|_{B^{s}_{p,r}(\mathbb{R}^d)}\|v\|_{B^{s}_{p,r}(\mathbb{R}^d)}.$$
\end{coro}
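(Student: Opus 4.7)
The plan is to prove the estimate via Bony's paraproduct decomposition, which is the standard tool for this kind of Moser-type product estimate in Besov spaces. Write
\begin{equation*}
uv \;=\; T_u v + T_v u + R(u,v),
\end{equation*}
where $T_u v = \sum_{j} S_{j-1}u\,\Delta_j v$ is the paraproduct and $R(u,v) = \sum_{j}\sum_{|j-j'|\le 1} \Delta_j u\,\Delta_{j'} v$ is the remainder. The spectral localization properties built into the dyadic blocks $\Delta_j$ (recorded right after the definition of $\mathcal{C}$ and $\widetilde{\mathcal{C}}$) tell us that $S_{j-1}u\,\Delta_j v$ is spectrally supported in an annulus of size $2^j$, while $\Delta_j u\,\Delta_{j'}v$ is supported in a ball of size $2^j$. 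This dichotomy is what makes the two pieces behave differently.

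First I would handle the paraproduct $T_u v$. Using the spectral localization and the $L^p$-boundedness of the convolution operators $\Delta_j$, one gets $\|\Delta_j(T_u v)\|_{L^p} \lesssim \sum_{|k-j|\le N_0} \|S_{k-1}u\|_{L^\infty}\|\Delta_k v\|_{L^p}$, and since $\|S_{k-1}u\|_{L^\infty}\le C\|u\|_{L^\infty}$, multiplying by $2^{js}$ and taking the $\ell^r$-norm yields
\begin{equation*}
\|T_u v\|_{B^s_{p,r}} \le C\|u\|_{L^\infty}\|v\|_{B^s_{p,r}}
\end{equation*}
for any $s\in\mathbb{R}$. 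By symmetry one obtains the analogous bound for $T_v u$ with the roles of $u$ and $v$ swapped.

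The main obstacle is the remainder $R(u,v)$, because its pieces $\Delta_j u\,\Delta_{j'}v$ are spectrally supported in a ball of radius $\sim 2^j$ rather than an annulus, so only those frequencies $\le 2^j$ can be recovered. The estimate $\|\Delta_k R(u,v)\|_{L^p} \lesssim \sum_{j\ge k-N_0} \|\Delta_j u\|_{L^\infty}\|\widetilde\Delta_j v\|_{L^p}$ yields, after multiplying by $2^{ks}$ and summing in $k$, a convergent geometric series in $2^{(k-j)s}$ precisely when $s>0$; this is where the positivity hypothesis is used. The outcome is
\begin{equation*}
\|R(u,v)\|_{B^s_{p,r}} \le C\|u\|_{L^\infty}\|v\|_{B^s_{p,r}},
\end{equation*}
and combining the three pieces gives the first stated inequality. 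For the second inequality, when $s>d/p$ or $s=d/p,\ r=1$, Proposition \ref{2} provides the embedding $B^s_{p,r}\hookrightarrow L^\infty$, so we may replace each $L^\infty$-norm on the right by the corresponding $B^s_{p,r}$-norm, which upgrades the estimate to the algebra inequality and shows in particular that $B^s_{p,r}$ is itself an algebra in that range.
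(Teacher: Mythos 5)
Your proof is correct and is exactly the standard argument: the paper itself gives no proof of this corollary, citing it directly from \cite{B.C.D}, and your Bony paraproduct decomposition with the $L^\infty$ bound on $S_{k-1}u$ for the paraproducts, the $s>0$ geometric-series summation for the remainder, and the embedding $B^s_{p,r}\hookrightarrow L^\infty$ for the algebra case is precisely the proof given in that reference. No discrepancies to report.
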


\begin{lemm}\label{Morse}
(Morse-type estimate, \cite{B.C.D,d1}) Let $s>\max\{\frac{d}{p},\frac{d}{2}\}$ and $(p, r)$ in $[1,\infty]^{2}$. For any $a\in B^{s-1}_{p,r}(\mathbb{R}^d)$ and $b\in B^{s}_{p,r}(\mathbb{R}^d)$, there exists a constant $C$ such that
$$\|ab\|_{B^{s-1}_{p,r}(\mathbb{R}^d)}\leq C\|a\|_{B^{s-1}_{p,r}(\mathbb{R}^d)}\|b\|_{B^{s}_{p,r}(\mathbb{R}^d)}.$$
\end{lemm}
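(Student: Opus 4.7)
The plan is to apply Bony's paraproduct decomposition
$$ab = T_a b + T_b a + R(a,b),$$
where $T_f g := \sum_{j} S_{j-1} f \, \Delta_j g$ is the paraproduct and $R(f,g) := \sum_{|j-j'|\le 1} \Delta_j f \, \Delta_{j'} g$ is the remainder, and to estimate each of the three pieces in $B^{s-1}_{p,r}(\mathbb{R}^d)$ separately.

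For the low--high paraproduct $T_b a$, the hypothesis $s > \max\{d/p, d/2\}$ together with Proposition \ref{2} gives $b \in L^\infty$ with $\|b\|_{L^\infty} \lesssim \|b\|_{B^s_{p,r}}$. The classical paraproduct bound $\|T_f g\|_{B^\sigma_{p,r}} \lesssim \|f\|_{L^\infty} \|g\|_{B^\sigma_{p,r}}$ (valid for every $\sigma \in \mathbb{R}$), applied with $\sigma = s-1$, immediately yields
$$\|T_b a\|_{B^{s-1}_{p,r}} \lesssim \|b\|_{L^\infty} \|a\|_{B^{s-1}_{p,r}} \lesssim \|b\|_{B^s_{p,r}} \|a\|_{B^{s-1}_{p,r}}.$$

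For the high--low paraproduct $T_a b$, I would shift a derivative from $b$ onto $a$. The refined paraproduct estimate $\|T_f g\|_{B^{\sigma+t}_{p,r}} \lesssim \|f\|_{B^t_{\infty,\infty}} \|g\|_{B^\sigma_{p,r}}$ (valid for $t<0$), used with $t=-1$ and $\sigma=s$, combined with the chain of embeddings $B^{s-1}_{p,r} \hookrightarrow B^{s-1-d/p}_{\infty,\infty} \hookrightarrow B^{-1}_{\infty,\infty}$ (which holds because $s \ge d/p$, by Proposition \ref{2}), gives
$$\|T_a b\|_{B^{s-1}_{p,r}} \lesssim \|a\|_{B^{-1}_{\infty,\infty}} \|b\|_{B^s_{p,r}} \lesssim \|a\|_{B^{s-1}_{p,r}} \|b\|_{B^s_{p,r}}.$$

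The main obstacle is the remainder $R(a,b)$. I would decompose $\Delta_k R(a,b) = \sum_{j \ge k - N_0} \Delta_k\!\bigl(\Delta_j a \cdot \tilde\Delta_j b\bigr)$, using that the spectral support of $\Delta_j a \cdot \tilde\Delta_j b$ is contained in a ball of radius comparable to $2^j$, and then apply Hölder's inequality in $L^p$ with Bernstein's inequality to control one of the factors in $L^\infty$ at a cost of $2^{jd/p}$. The resulting scalar estimate produces a geometric series in $j \ge k - N_0$; the boundedness of its low-frequency contribution is secured by $s > d/p$, while the positivity of the relevant exponent of the diagonal term rests on $s > d/2$. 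A discrete Young inequality for the convolution in $k$ then gives the $\ell^r$-sum defining $\|a\|_{B^{s-1}_{p,r}} \|b\|_{B^s_{p,r}}$, producing $\|R(a,b)\|_{B^{s-1}_{p,r}} \lesssim \|a\|_{B^{s-1}_{p,r}} \|b\|_{B^s_{p,r}}$. Adding the three contributions completes the proof. The delicate point is precisely the remainder: a naive use of only $b \in L^\infty$ would require $s > 1$, whereas the full strength of the assumption $s > \max\{d/p, d/2\}$ is needed to exploit the Besov structure of $b$ together with the low-regularity information on $a$ and close the geometric sum.
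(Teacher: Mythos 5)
The paper offers no proof of this lemma — it is quoted from \cite{B.C.D,d1} — so there is no internal argument to compare against; the Bony decomposition you choose is indeed the standard route, and your treatment of the two paraproducts $T_b a$ and $T_a b$ is correct as written. The gap is in the remainder, exactly where you locate the difficulty. The estimate you describe — H\"older in $L^\infty\times L^p$ after using Bernstein to put one factor of $\Delta_j a\,\tilde\Delta_j b$ into $L^\infty$ at a cost of $2^{jd/p}$ — yields $\|\Delta_j a\|_{L^p}\lesssim 2^{-j(s-1)}c_j\|a\|_{B^{s-1}_{p,r}}$ and $\|\tilde\Delta_j b\|_{L^\infty}\lesssim 2^{jd/p}\|\tilde\Delta_j b\|_{L^p}\lesssim 2^{-j(s-d/p)}\tilde c_j\|b\|_{B^s_{p,r}}$, hence a diagonal decay $2^{-j(2s-1-d/p)}$; closing the sum over $j\ge k-N_0$ then requires $2s-1-d/p>0$, i.e. $s>(1+d/p)/2$. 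This threshold is strictly above $\max\{d/p,d/2\}$ whenever $d/p<1$ and $d/p'<1$: for instance $d=1$, $p=2$, $s=0.6$ satisfies the hypothesis of the lemma but not $2s>1+d/p=3/2$. Since the paper invokes the lemma with $d=1$ and $s$ arbitrarily close to $\max\{1/p,1/2\}$, this regime cannot be discarded, and contrary to your closing remark the condition $s>d/2$ does not make the exponent $2s-1-d/p$ positive.

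The repair is to apply Bernstein to the product at the output frequency $2^k$ rather than to one factor at the input frequency $2^j$ (since $k\le j+N_0$ in the remainder, this is a gain, not a loss). For $p\ge 2$, H\"older gives $\|\Delta_j a\,\tilde\Delta_j b\|_{L^{p/2}}\le\|\Delta_j a\|_{L^p}\|\tilde\Delta_j b\|_{L^p}$ and Bernstein gives $\|\Delta_k(\Delta_j a\,\tilde\Delta_j b)\|_{L^p}\lesssim 2^{kd/p}\|\Delta_j a\,\tilde\Delta_j b\|_{L^{p/2}}$, so that $2^{k(s-1)}\|\Delta_k R(a,b)\|_{L^p}\lesssim 2^{-k(s-d/p)}\sum_{j\ge k-N_0}c_j\tilde c_j\,2^{(k-j)(2s-1)}$; the prefactor is bounded because $s>d/p$, and the convolution closes because $2s-1>0$, which is where $s>d/2$ genuinely enters. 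For $p<2$ one instead pairs $L^p$ with $L^{p'}$ (or passes through $L^1$), and the diagonal exponent becomes $2s-1-(2d/p-d)$, which is positive because $s>d/p$ and $d\ge 1$. With this modification (and the two cases $p\ge 2$, $p<2$ treated separately) your argument is complete; as written, the remainder step does not reach the full range of $s$ claimed in the lemma.
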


\begin{rema}
\cite{B.C.D} Let $s\in\mathbb{R},1\leq p,r\leq\infty$. Then the following properties hold true: \\
$(i)~B^s_{p,r}(\mathbb{R}^d)$ is a Banach space and continuously embedding into $\mathcal{S}'(\mathbb{R}^d)$, where $\mathcal{S}'(\mathbb{R}^d)$ is the dual space of the Schwartz space $\mathcal{S}(\mathbb{R}^d)$. \\
$(ii)$ If $p,r<\infty$, then $\mathcal{S}(\mathbb{R}^d)$ is dense in $B^s_{p,r}(\mathbb{R}^d)$.\\
$(iii)$ If $u_n$ is a bounded sequence of $B^s_{p,r}(\mathbb{R}^d)$, then an element $u\in B^s_{p,r}(\mathbb{R}^d)$ and a subsequence $u_{n_k}$ exist such that
$$ \lim_{k\rightarrow\infty}u_{n_k}=u~~in~~\mathcal{S}'(\mathbb{R}^d)~~and~~\|u\|_{B^s_{p,r}(\mathbb{R}^d)}\leq C\liminf_{k\rightarrow\infty}\|u_{n_k}\|_{B^s_{p,r}(\mathbb{R}^d)}.$$
$(iv)~~B^s_{2,2}(\mathbb{R}^d)=H^s(\mathbb{R}^d)$.
\end{rema}

Now we introduce a priori estimates for the following transport equation.
 \begin{align}\label{20}
\left\{
\begin{array}{ll}
f_{t}+v\nabla f=g,\\[1ex]
f|_{t=0}=f_{0}.\\[1ex]
\end{array}
\right.
\end{align}
\begin{lemm}\label{est1}
(A priori estimates in Besov spaces, \cite{B.C.D,d1}) Let $1\leq p \leq p_1\leq \infty$, $1\leq r\leq \infty$, $s\geq -d\min(\frac{1}{p_1},\frac{1}{p'})$. For the solution $f\in L^{\infty}([0,T];B^s_{p,r}(\mathbb{R}^d))$ of (\ref{20}) with velocity $\nabla v\in L^1([0,T];B^s_{p,r}(\mathbb{R}^d)\cap L^{\infty}(\mathbb{R}^d))$,  initial data $f_0\in B^s_{p,r}(\mathbb{R}^d)$ and $g\in L^1([0,T];B^s_{p,r}(\mathbb{R}^d))$, we have the following statements.
If $s\neq 1+{1\over p}$ or $r=1$,
\begin{align}\label{9}
\|f(t)\|_{B^{s}_{p,r}(\mathbb{R}^d)}\leq \|f_0\|_{B^s_{p,r}(\mathbb{R}^d)}+\int^t_0\bigg(\|g(t')\|_{B^s_{p,r}(\mathbb{R}^d)}
+CV'_{p_1}(t')\|f(t')\|_{B^s_{p,r}(\mathbb{R}^d)}\bigg)dt',
\end{align}
\begin{align}\label{10}
\|f\|_{B^{s}_{p,r}(\mathbb{R}^d)}\leq \bigg(\|f_0\|_{B^s_{p,r}(\mathbb{R}^d)}+\int^t_0\exp(-CV_{p_1}(t'))
\|g(t')\|_{B^s_{p,r}(\mathbb{R}^d)}dt'\bigg)\exp(CV_{p_1}(t)),
\end{align}
where  $V_{p_1}(t)=\displaystyle\int^t_0\|\nabla v\|_{B^{\frac{d}{p_1}}_{p_1,\infty}(\mathbb{R}^d)\cap L^{\infty}(\mathbb{R}^d)}dt'$ if $s<1+\frac{d}{p_1}$, $V_{p_1}(t)=\displaystyle\int^t_0\|\nabla v\|_{B^{s-1}_{p_1,r}(\mathbb{R}^d)}dt'$ if $s>1+\frac{d}{p_1}$ or $s=1+\frac{d}{p_1}, r=1$, and $C$ is a constant depending only on $s,~p,~p_1$ and $r$.
\end{lemm}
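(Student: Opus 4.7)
The plan is to prove both estimates by frequency localization. Applying the dyadic block $\Delta_j$ to the transport equation yields
\[
\partial_t \Delta_j f + v\cdot\nabla \Delta_j f = \Delta_j g + R_j,\qquad R_j := [v\cdot\nabla,\Delta_j]f.
\]
An $L^p$ energy estimate on $\Delta_j f$ (multiply by $|\Delta_j f|^{p-2}\Delta_j f$, integrate in $x$, and integrate the transport term by parts so that it contributes only a $\|\nabla v\|_{L^\infty}\|\Delta_j f\|_{L^p}^p$ piece) produces the block-wise bound
\[
\|\Delta_j f(t)\|_{L^p}\le \|\Delta_j f_0\|_{L^p} + \int_0^t \!\Bigl(\|\Delta_j g(t')\|_{L^p} + C\|\nabla v(t')\|_{L^\infty}\|\Delta_j f(t')\|_{L^p} + \|R_j(t')\|_{L^p}\Bigr) dt'.
\]

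The heart of the proof is controlling the commutator $R_j$. Using Bony's paradifferential decomposition $v\cdot\nabla f = T_v\nabla f + T_{\nabla f}v + R(v,\nabla f)$ together with a first-order Taylor expansion of the convolution kernel of $\Delta_j$, standard paradifferential arguments give
\[
2^{js}\|R_j(t)\|_{L^p}\le C\, c_j(t)\,\|\nabla v(t)\|_{X_{p_1}}\,\|f(t)\|_{B^s_{p,r}},\qquad (c_j)\in\ell^r,\ \|(c_j)\|_{\ell^r}\le 1,
\]
where $X_{p_1}=B^{d/p_1}_{p_1,\infty}\cap L^\infty$ in the subcritical range $s<1+d/p_1$ and $X_{p_1}=B^{s-1}_{p_1,r}$ in the supercritical range $s>1+d/p_1$ (or at the critical index $s=1+d/p_1$ with $r=1$). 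The hypothesis $s\neq 1+\tfrac{1}{p}$ (or $r=1$) excludes precisely the degenerate exponent at which this decomposition would otherwise incur a logarithmic loss.

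Multiplying the block estimate by $2^{js}$, taking the $\ell^r(j)$-norm and integrating in time then yields (2.9). Estimate (2.10) follows from (2.9) by Gronwall's inequality: setting $\phi(t):=\|f(t)\|_{B^s_{p,r}}\exp(-CV_{p_1}(t))$ converts (2.9) into
\[
\phi(t)\le \|f_0\|_{B^s_{p,r}} + \int_0^t \exp(-CV_{p_1}(t'))\|g(t')\|_{B^s_{p,r}}\,dt',
\]
and multiplying through by $\exp(CV_{p_1}(t))$ recovers (2.10). The main obstacle is the commutator estimate for $R_j$: the paradifferential case analysis at and across the critical index $s=1+d/p_1$ is delicate and is the reason two different choices of $V_{p_1}$ appear in the statement. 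The remaining steps—the per-block energy inequality and the Gronwall bookkeeping—are routine.
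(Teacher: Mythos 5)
The paper does not prove this lemma at all: it is quoted verbatim from the cited references (Bahouri--Chemin--Danchin, Theorem 3.14 and its proof, and Danchin's Camassa--Holm paper), so there is no in-paper argument to compare against. Your outline correctly reconstructs the standard proof from those references --- spectral localization, the $L^p$ energy estimate on each block, the Bony-decomposition commutator bound $2^{js}\|[v\cdot\nabla,\Delta_j]f\|_{L^p}\le Cc_j\|\nabla v\|_{X_{p_1}}\|f\|_{B^s_{p,r}}$ with the two regimes for $X_{p_1}$, and the Gronwall step passing from (\ref{9}) to (\ref{10}) --- and the one place where you defer to "standard paradifferential arguments" (the commutator lemma) is exactly the technical core that the cited book proves as a separate lemma, so deferring it is appropriate here.
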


\begin{lemm}\label{est0}\cite{Luo-Yin}
Let $1\leq p\leq \infty$, $1\leq r\leq \infty$, $\sigma> \max(\frac{1}{2},\frac{1}{p})$. For the solution $f\in L^{\infty}(0,T;B^\sigma_{p,r}(\mathbb{R}))$ of (\ref{20}) with the velocity $ v\in L^1(0,T;B^{\sigma+1}_{p,r}(\mathbb{R}))$,  the initial data $f_0\in B^\sigma_{p,r}(\mathbb{R})$ and $g\in L^1(0,T;B^\sigma_{p,r}(\mathbb{R}^d))$, we have
\begin{align}\label{010}
\|f\|_{B^{\sigma-1}_{p,r}(\mathbb{R})}\leq \bigg(\|f_0\|_{B^{\sigma-1}_{p,r}(\mathbb{R})}+\int^t_0\exp(-CV(t'))\|g(t')\|_{B^{\sigma-1}_{p,r}(\mathbb{R})}dt'\bigg)\exp(CV(t)),
\end{align}
where  $V(t)=\displaystyle\int^t_0\|v\|_{B^{\sigma+1}_{p,r}(\mathbb{R})}$ and $C$ is a constant depending only on $\sigma,~p$ and $r$.
\end{lemm}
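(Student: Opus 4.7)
The plan is to reduce the claim to Lemma \ref{est1} invoked at regularity level $s=\sigma-1$ with $p_1=p$. The content of the present statement is that the ``velocity quantity'' $V_{p_1}(t)$ appearing in Lemma \ref{est1} at that level can be absorbed into $V(t)=\int_0^t\|v\|_{B^{\sigma+1}_{p,r}}$, which carries one more derivative than what Lemma \ref{est1} strictly requires. This ``extra derivative'' in the hypothesis on $v$ is precisely what allows us to estimate $f$ one derivative below its natural space while still closing the loop.

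First, I would instantiate Lemma \ref{est1} in dimension $d=1$ with $p_1=p$ and $s=\sigma-1$; conclusion (\ref{10}) then gives exactly the inequality (\ref{010}) but with $V(t)$ replaced by
\[
V_p(t):=\int_0^t\|\nabla v(t')\|_{Y}\,dt',\qquad Y=\begin{cases}B^{1/p}_{p,\infty}(\mathbb{R})\cap L^\infty(\mathbb{R}),& \sigma-1<1+\tfrac{1}{p},\\ B^{\sigma-2}_{p,r}(\mathbb{R}),& \sigma-1>1+\tfrac{1}{p}\text{ or }r=1.\end{cases}
\]
It then suffices to check $V_p(t)\leq CV(t)$ in each case. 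In the low-regularity case, the hypothesis $\sigma>\frac{1}{p}$ combined with Proposition \ref{2} yields $B^{\sigma}_{p,r}(\mathbb{R})\hookrightarrow B^{1/p}_{p,\infty}(\mathbb{R})\cap L^\infty(\mathbb{R})$, so
\[
\|\nabla v\|_{Y}\leq C\|\nabla v\|_{B^{\sigma}_{p,r}}\leq C\|v\|_{B^{\sigma+1}_{p,r}}.
\]
In the high-regularity case, directly from the definition of the Besov norm one has $\|\nabla v\|_{B^{\sigma-2}_{p,r}}\leq\|v\|_{B^{\sigma-1}_{p,r}}\leq C\|v\|_{B^{\sigma+1}_{p,r}}$. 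Integration in time gives $V_p(t)\leq CV(t)$, and substitution back into the Lemma \ref{est1} bound finishes the argument.

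The one delicate point I expect is the borderline case $\sigma-1=1+\frac{1}{p}$ with $r>1$, which Lemma \ref{est1} does not cover. To handle it I would interpolate via Lemma \ref{3} between the two neighboring regularities $\sigma-1-\varepsilon$ and $\sigma-1+\varepsilon$, both of which are covered by the analysis above. Since our hypotheses furnish $v$ with strictly more regularity than what either neighboring case requires, the interpolation constants remain controlled as $\varepsilon\to 0$. This bookkeeping around the threshold is the only place where the argument is not entirely automatic; the rest is a direct consequence of Lemma \ref{est1} together with the Besov embeddings of Proposition \ref{2}.
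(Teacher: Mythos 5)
There is a genuine gap, and it sits exactly where this lemma earns its keep. Note first that the paper does not prove this statement at all: it is quoted from \cite{Luo-Yin}, so there is no internal proof to compare against, and the question is purely whether your reduction is sound. It is not, in the full range of indices claimed. You instantiate Lemma \ref{est1} at $s=\sigma-1$, $p_1=p$, $d=1$, but Lemma \ref{est1} requires $s\geq -\min(\frac1p,\frac{1}{p'})$, i.e. $\sigma\geq\max(\frac1p,\frac{1}{p'})$. The hypothesis of the present lemma only gives $\sigma>\max(\frac12,\frac1p)$, which is strictly weaker whenever $p>2$: for instance with $p=\infty$ and $\sigma\in(\frac12,1)$ one has $\sigma-1<0=-\min(\frac1p,\frac{1}{p'})$, and Lemma \ref{est1} simply does not apply at that level. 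This regime is not an isolated borderline index, so your interpolation patch (which addresses only $\sigma-1=1+\frac1p$, $r>1$) cannot repair it. It is also not a vacuous regime for the paper: the uniqueness and stability steps of Theorem \ref{Thm1} run the difference of two solutions at regularity $s-1$ with $s>\max(\frac12,\frac1p)$ and general $p$, which is precisely why the authors import this lemma rather than (\ref{10}).

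The missing idea is that the extra derivative on $v$ (namely $v\in B^{\sigma+1}_{p,r}$ rather than merely $\nabla v\in B^{1/p}_{p,\infty}\cap L^{\infty}$) must be used \emph{inside} the commutator estimate, not merely to dominate $V_{p_1}$ after the fact. In the proof given in \cite{Luo-Yin} one redoes the localization $\partial_t\Delta_j f+v\partial_x\Delta_j f=\Delta_j g+[v,\Delta_j]\partial_x f$ and estimates the commutator by Bony's decomposition; the problematic remainder term $R(\partial_x v,f)$ is controlled in $B^{\sigma-1}_{p,r}$ because the sum of regularities $(\sigma-1)+\sigma$ is positive, which is exactly where the hypothesis $\sigma>\frac12$ enters, while the paraproduct terms use $\sigma>\frac1p$ through $B^{\sigma}_{p,r}\hookrightarrow L^{\infty}$. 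Your argument never uses $\sigma>\frac12$ anywhere, which is a reliable sign that the reduction loses the content of the lemma. (The portion of your argument covering $p\leq 2$, where $\max(\frac12,\frac1p)=\max(\frac1p,\frac{1}{p'})$, is fine, and your dominations $V_p(t)\leq CV(t)$ are correct as far as they go.)
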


\begin{lemm}\label{est3}\cite{Luo-Yin}
 For the solution $f\in L^{\infty}(0,T;B^{1+\frac{1}{p}}_{p,r}(\mathbb{R}))$ of (\ref{20}) with the velocity $ v\in L^1(0,T;B^{2+\frac{1}{p}}_{p,r}(\mathbb{R}))$,  the initial data $f_0\in B^{1+\frac{1}{p}}_{p,r}(\mathbb{R})$ and $g\in L^1(0,T;B^{1+\frac{1}{p}}_{p,r}(\mathbb{R}^d))$, we have
\begin{align}
\|f\|_{B^{1+\frac{1}{p}}_{p,r}(\mathbb{R})}\leq \bigg(\|f_0\|_{B^{1+\frac{1}{p}}_{p,r}(\mathbb{R})}+\int^t_0\exp(-CV(t'))\|g(t')\|_{B^{1+\frac{1}{p}}_{p,r}(\mathbb{R})}dt'\bigg)\exp(CV(t)),
\end{align}
where  $V(t)=\displaystyle\int^t_0\|v\|_{B^{2+\frac{1}{p}}_{p,r}(\mathbb{R})}$ and $C$ is a constant depending only on $p$ and $r$.
\end{lemm}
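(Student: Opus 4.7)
\medskip

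\noindent\textbf{Proof proposal.} The statement is the analogue of Lemma~\ref{est1} at the critical Besov index $s=1+\frac{1}{p}$ (with $d=1$), which is precisely the case excluded from Lemma~\ref{est1} when $r>1$. My plan is to mimic the standard Danchin-type derivation of (\ref{10}) and to use the fact that the velocity is assumed one derivative smoother than in Lemma~\ref{est1} in order to close the borderline commutator estimate.

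First I would localize the equation in frequency. Applying $\Delta_j$ to (\ref{20}) yields
\begin{equation*}
\partial_t \Delta_j f + v\,\partial_x \Delta_j f = \Delta_j g + R_j, \qquad R_j := [v,\Delta_j]\partial_x f.
\end{equation*}
Multiplying by $|\Delta_j f|^{p-2}\Delta_j f$, integrating in $x$ and using $\|\partial_x v\|_{L^\infty}\le C\|v\|_{B^{2+1/p}_{p,r}}$ (via Proposition~\ref{2}) gives, after time integration,
\begin{equation*}
\|\Delta_j f(t)\|_{L^p}\le \|\Delta_j f_0\|_{L^p}+\int_0^t \bigl(\|\Delta_j g\|_{L^p}+\|R_j\|_{L^p}+C\|v\|_{B^{2+1/p}_{p,r}}\|\Delta_j f\|_{L^p}\bigr)\,dt'.
\end{equation*}

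The decisive step is the commutator estimate at the critical index. In the non-critical range $s\ne 1+1/p$ one has $\|(2^{js}\|R_j\|_{L^p})_j\|_{\ell^r}\lesssim \|\partial_x v\|_{L^\infty}\|f\|_{B^s_{p,r}}$, but this bound breaks down precisely at $s=1+1/p$. To circumvent this I would use the stronger assumption $v\in B^{2+1/p}_{p,r}$: a Bony decomposition of $R_j=[T_v+T'_v+R(v,\cdot),\Delta_j]\partial_x f$ combined with the algebra estimate of Corollary~\ref{est3} and Lemma~\ref{Morse} (both applicable since $2+1/p>1/p$) yields
\begin{equation*}
\Bigl\|\bigl(2^{j(1+1/p)}\|R_j\|_{L^p}\bigr)_{j\ge -1}\Bigr\|_{\ell^r}\le C\,\|v\|_{B^{2+1/p}_{p,r}}\,\|f\|_{B^{1+1/p}_{p,r}}.
\end{equation*}
This is the only place where the extra half-derivative of regularity on $v$ is actually spent, and I expect this commutator bound to be the main technical obstacle, since the usual $\|\partial_xv\|_{L^\infty}$-based estimate is logarithmically divergent at the critical index.

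Once the commutator is controlled, I multiply by $2^{j(1+1/p)}$, take the $\ell^r_j$ norm, and obtain
\begin{equation*}
\|f(t)\|_{B^{1+1/p}_{p,r}}\le \|f_0\|_{B^{1+1/p}_{p,r}}+\int_0^t\Bigl(\|g(t')\|_{B^{1+1/p}_{p,r}}+C\,\|v(t')\|_{B^{2+1/p}_{p,r}}\|f(t')\|_{B^{1+1/p}_{p,r}}\Bigr)\,dt'.
\end{equation*}
Applying Gronwall's inequality to this integral inequality, with $V(t)=\int_0^t\|v\|_{B^{2+1/p}_{p,r}}\,dt'$, produces exactly the exponential bound stated in the lemma. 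A standard regularization argument (convolution in $x$, passage to the limit using Remark~2.5(iii)) justifies the above manipulations when $f$ is merely in $L^\infty_T B^{1+1/p}_{p,r}$, thereby concluding the proof.
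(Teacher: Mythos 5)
The paper offers no proof of this lemma at all: it is quoted directly from the reference \cite{Luo-Yin}, so there is no in-paper argument to compare yours against. Judged on its own terms, your sketch follows the standard route (frequency localization, an $L^p$ estimate on each dyadic block, a commutator bound at the critical index, Gronwall), which is indeed how the estimate is obtained in the cited source, and you correctly identify both why the ordinary Lemma \ref{est1} fails here for $r>1$ and where the extra derivative on $v$ must be spent.

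The one genuine soft spot is that the decisive inequality
\begin{equation*}
\Bigl\|\bigl(2^{j(1+1/p)}\|[v,\Delta_j]\partial_x f\|_{L^p}\bigr)_{j\ge-1}\Bigr\|_{\ell^r}\le C\,\|v\|_{B^{2+1/p}_{p,r}}\,\|f\|_{B^{1+1/p}_{p,r}}
\end{equation*}
is asserted rather than proved: the product estimates you invoke (the algebra property and the Morse-type estimate of Lemma \ref{Morse}) are not commutator estimates, and applied naively to $[v,\Delta_j]\partial_x f$ they discard exactly the cancellation that makes the commutator better than either of its two terms. What actually has to be done is to bound the four Bony pieces separately: $[T_v,\Delta_j]\partial_x f$ is controlled by $\|\partial_x v\|_{L^\infty}\|f\|_{B^{1+1/p}_{p,r}}$ just as in the subcritical case; the pieces $T'_{\Delta_j\partial_x f}v$ and $\Delta_j R(v,\partial_x f)$ are handled via Bernstein, $\|\Delta_{j'}v\|_{L^\infty}\lesssim 2^{-2j'}d_{j'}\|v\|_{B^{2+1/p}_{p,r}}$ with $(d_{j'})\in\ell^r$; and the genuinely critical piece is $\Delta_j T_{\partial_x f}v$, where for $r>1$ one only has $\|S_{j'-1}\partial_x f\|_{L^\infty}\lesssim j'\,\|f\|_{B^{1+1/p}_{p,r}}$ (the logarithmic loss you mention), and it is precisely the factor $2^{-j'(2+1/p)}d_{j'}\|v\|_{B^{2+1/p}_{p,r}}$ carried by $\|\Delta_{j'}v\|_{L^p}$ that absorbs the extra factor of $j'$. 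Carrying out these four estimates explicitly closes the argument; without them, the heart of the proof is missing.
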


\begin{lemm}\label{est2}\cite{B.C.D}
Let $s$ be as in the statement of Lemma \ref{est1}. Let $f_0 \in B^s_{p,r}(\mathbb{R}^d)$, $g \in L^1([0, T];B^s_{p,r}(\mathbb{R}^d))$, and $v$ be a time-dependent vector field such that $v\in L^\rho([0, T];B^{-M}_{\infty,\infty}(\mathbb{R}^d))$ for some $\rho > 1$ and $M >0$, and
$$\nabla v\in L^1([0, T];B_{p_1,\infty}^{\frac{d}{p}}(\mathbb{R}^d) ), ~~if ~s<1+\frac{d}{p_1}, $$
$$\nabla v\in L^1([0, T];B_{p_1,\infty}^{s-1}(\mathbb{R}^d)), ~~if~ s>1+\frac{d}{p_1}
~~or~~s=1+\frac{d}{p_1}~ and~ r=1 .$$
Then, (\ref{20}) has a unique solution $f$ in
\\-the space $\mathcal{C}([0,T];B_{p,r}^s(\mathbb{R}^d)),~~if~ r<\infty,$
\\-the space $(\bigcap_{s'<s}\mathcal{C}([0,T];B_{p,\infty}^{s'}
(\mathbb{R}^d)))\bigcap\mathcal{C}_{w}([0,T];B_{p,\infty}^{s}(\mathbb{R}^d))),~~if~ r=\infty.$
\\Moreover, the inequalities of Lemma \ref{est1} hold true.
\end{lemm}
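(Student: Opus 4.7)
The plan is to follow the classical Friedrichs mollification scheme for linear transport equations in Besov spaces, using Lemma \ref{est1} as the engine for both uniqueness and the a priori control needed for compactness. Uniqueness is essentially immediate: if $f_1,f_2$ are two solutions sharing $(f_0,g)$, their difference $\delta=f_1-f_2$ solves (\ref{20}) with zero data and zero source, so the estimate (\ref{10}) gives $\|\delta(t)\|_{B^s_{p,r}}\leq 0$ for all $t\in[0,T]$, hence $f_1=f_2$.

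For existence I would regularize by setting $v^n=S_n v$, $f_0^n=S_n f_0$ and $g^n=S_n g$. Since $v^n$ is smooth in $x$ (though possibly rough in $t$), the method of characteristics produces a smooth global solution $f^n$ of the mollified problem. Applying Lemma \ref{est1} with the bounds on $\nabla v^n$ inherited from $\nabla v$ gives a uniform bound of $\{f^n\}$ in $L^\infty([0,T];B^s_{p,r})$. Using the equation itself, together with the hypothesis $v\in L^\rho([0,T];B^{-M}_{\infty,\infty})$, I would control $\partial_t f^n=-v^n\cdot\nabla f^n+g^n$ in a negative-regularity space such as $L^\rho([0,T];B^{s-M-1}_{p,\infty})$, which produces enough equicontinuity in time to extract, via a diagonal procedure and statement $(iii)$ of the Remark, a subsequential limit $f\in L^\infty([0,T];B^s_{p,r})$ solving (\ref{20}) distributionally. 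Lower semicontinuity of the Besov norm then transfers the estimates of Lemma \ref{est1} from the approximants $f^n$ to $f$.

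For time continuity when $r<\infty$, density of Schwartz functions (statement $(ii)$ of the Remark) lets me approximate $(f_0,g)$ by smooth data. The corresponding smooth solutions $f_\varepsilon$ are trivially $\mathcal{C}([0,T];B^s_{p,r})$, and the difference of any two of them is controlled by (\ref{10}) applied to Cauchy data, so $\{f_\varepsilon\}$ is Cauchy in $\mathcal{C}([0,T];B^s_{p,r})$ and its limit $f$ lives in this space. When $r=\infty$ density fails; I would instead obtain weak-$*$ continuity in $B^s_{p,\infty}$ by testing against $\mathcal{S}$, together with strong continuity in $B^{s'}_{p,\infty}$ for every $s'<s$ via the interpolation inequality of Lemma \ref{3} combined with the uniform top-regularity bound.

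The main obstacle I anticipate is the delicate interplay between the very weak time-regularity assumption $v\in L^\rho([0,T];B^{-M}_{\infty,\infty})$ and the need to pass to the limit in the product $v^n\cdot\nabla f^n$. The hypothesis is tailored so that $v\cdot\nabla f$ makes distributional sense even when $v$ itself lies in a much rougher space than $\nabla v$; keeping track of which pieces converge strongly (the low-regularity part, via Aubin--Lions-type compactness) versus only weakly (the top-regularity part) without losing any derivative count in the limit is the technical heart of the argument. The limiting case $s=1+d/p_1$ with $r=1$ also needs a slightly more careful paraproduct decomposition to avoid a logarithmic loss in the commutator estimate driving Lemma \ref{est1}.
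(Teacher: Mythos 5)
You should first note that the paper does not prove this statement at all: Lemma \ref{est2} is imported verbatim from the reference [B.C.D] (it is Theorem 3.19 there, the existence/uniqueness counterpart of the a priori estimates in Lemma \ref{est1}), so there is no in-paper proof to compare against. Measured against the standard argument in that reference, your outline is the right one in broad strokes: uniqueness from the a priori estimate applied to the difference of two solutions, existence by mollifying the data and the velocity, solving along the flow of the smoothed field, extracting a limit from the uniform $L^\infty([0,T];B^s_{p,r})$ bound, and then upgrading to time continuity. Your uniqueness step is fine as stated, since both solutions lie in the same space and share the same $v$, so $\delta=f_1-f_2$ is an admissible solution of the homogeneous problem and (\ref{10}) forces it to vanish.

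The genuine soft spot is the time-continuity argument for $r<\infty$. You assert that solutions with Schwartz data are ``trivially'' in $\mathcal{C}([0,T];B^s_{p,r})$, but this is exactly the point at issue one regularity level up: with $\nabla v$ only in $L^1([0,T];B^{s-1}_{p_1,\infty})$ (the case $s>1+d/p_1$), smooth initial data does not propagate to regularity above $s$, so the approximate solutions are only known to be bounded in $L^\infty_T B^s_{p,r}$ and continuous with values in $B^{s-1}_{p,r}$ --- the same predicament as the limit $f$ itself. The reference resolves this differently: one shows that $S_jf$ is continuous in time with values in $B^s_{p,r}$ for each $j$, and that $\|f-S_jf\|_{L^\infty_TB^s_{p,r}}\to0$ as $j\to\infty$, which uses the summability in $j$ afforded precisely by $r<\infty$; continuity then follows as a uniform limit of continuous functions. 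Your Cauchy-sequence device does work for the data $(f_0,g)$ but cannot by itself place each approximant in $\mathcal{C}([0,T];B^s_{p,r})$. A second, more minor point: in the compactness step it is cleaner (and is what the reference does) to show that the mollified solutions form a Cauchy sequence in the weaker norm $L^\infty_TB^{s-1}_{p,r}$ by writing the equation for $f^n-f^m$ with source $(v^n-v^m)\cdot\nabla f^m$, rather than invoking an Aubin--Lions extraction, which is awkward here because $B^s_{p,r}$ does not embed compactly into anything useful on all of $\mathbb{R}^d$.
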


\begin{lemm}(Kato-Ponce commutator estimates, \cite{K.P})\label{com.est}.
If $s>0$, $f\in H^s(\mathbb{R})\cap W^{1,\infty}(\mathbb{R}),~g\in H^{s-1}(\mathbb{R})\cap L^\infty(\mathbb{R})$ and denote that $\Lambda^s=(1-\Delta)^{\frac{s}{2}}$, then
$$\|\Lambda^s(fg)-f\Lambda^s g\|_{L^2(\mathbb{R})}\leq C(\|\Lambda^sf\|_{L^2(\mathbb{R})}\|g\|_{L^\infty(\mathbb{R})}+\|f_x\|_{L^\infty(\mathbb{R})}\|\Lambda^{s-1}g\|_{L^2(\mathbb{R})}).$$
\end{lemm}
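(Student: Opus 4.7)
The plan is to prove the commutator estimate via a Littlewood-Paley / paraproduct decomposition. Writing $fg = T_fg + T_gf + R(f,g)$ in Bony's decomposition, where $T_ab = \sum_j S_{j-1}a\,\Delta_j b$ and $R$ is the ``high-high'' remainder, after rearrangement one obtains
$$\Lambda^s(fg) - f\Lambda^s g = \sum_j [\Lambda^s, S_{j-1}f]\Delta_j g + \bigl(\Lambda^s T_g f - (f\Lambda^s g - T_f \Lambda^s g)\bigr) + \Lambda^s R(f,g).$$
The point of this split is that each of the three pieces carries a distinct frequency geometry and should be estimated in a different way.

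First I would handle the genuine commutator term $\sum_j [\Lambda^s, S_{j-1}f]\Delta_j g$. Because $S_{j-1}f$ is spectrally supported in frequencies $\lesssim 2^{j-1}$ while $\Delta_j g$ sits in an annulus of size $2^j$, the symbol difference satisfies $|\langle\xi\rangle^s - \langle\xi-\eta\rangle^s|\lesssim |\eta|\,\langle\xi\rangle^{s-1}$ when $|\eta|\ll|\xi|$. Turning this into an $L^2$ kernel estimate via a first-order Taylor expansion of the multiplier yields
$$\|[\Lambda^s,S_{j-1}f]\Delta_j g\|_{L^2}\lesssim \|(S_{j-1}f)_x\|_{L^\infty}\,\|\Lambda^{s-1}\Delta_j g\|_{L^2}\lesssim \|f_x\|_{L^\infty}\,2^{(s-1)j}\|\Delta_j g\|_{L^2},$$
and summing in $j$ with Littlewood-Paley almost-orthogonality produces the desired bound $\|f_x\|_{L^\infty}\|\Lambda^{s-1}g\|_{L^2}$.

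For the paraproduct piece $\Lambda^s T_g f$, Bernstein's inequality applied to each block $S_{j-1}g\,\Delta_j f$ gives $\|\Lambda^s T_g f\|_{L^2}\lesssim \|g\|_{L^\infty}\|\Lambda^s f\|_{L^2}$ directly; the counterterm $f\Lambda^s g - T_f\Lambda^s g = T_{\Lambda^s g}f + R(f,\Lambda^s g)$ is handled similarly by distributing $\Lambda^s$ onto $f$ using frequency localization. The remainder $\Lambda^s R(f,g)$ is likewise controlled by $\|g\|_{L^\infty}\|\Lambda^s f\|_{L^2}$ after summing $2^{js}\|\Delta_j R(f,g)\|_{L^2}$ and using Bernstein. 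The main technical obstacle is the first step: rigorously converting the heuristic symbol bound $|\langle\xi\rangle^s-\langle\xi-\eta\rangle^s|\lesssim |\eta|\langle\xi\rangle^{s-1}$ into a kernel estimate uniform in $j$. For $s\geq 1$ this follows from Taylor's theorem applied to the smooth symbol $\langle\cdot\rangle^s$ away from the origin, but for $0<s<1$ the argument requires a more delicate H\"older-type replacement, and this is where the bulk of the real work lies.
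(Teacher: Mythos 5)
The paper offers no proof of this lemma at all: it is quoted from Kato--Ponce \cite{K.P} and used as a black box, so your attempt can only be judged on its own merits. The outline follows the standard paraproduct strategy, and two of your three pieces are sound. The low--high commutator $\sum_j[\Lambda^s,S_{j-1}f]\Delta_j g$ works as you describe, although you have misplaced the difficulty: since each summand is spectrally supported in an annulus $|\xi|\sim 2^j$, one replaces $\Lambda^s$ there by $2^{js}\theta^{(j)}(2^{-j}D)$ with $\theta^{(j)}(\xi)=(2^{-2j}+|\xi|^2)^{s/2}\psi(\xi)$, which is smooth uniformly in $j$ for \emph{every} $s>0$, so the first-order kernel estimate needs no H\"older-type replacement when $0<s<1$. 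Likewise $\Lambda^s T_g f$ and $\Lambda^s R(f,g)$ are genuinely controlled by $\|g\|_{L^\infty}\|\Lambda^s f\|_{L^2}$: for the remainder the output localization $\Delta_k\Lambda^s$ contributes $2^{ks}=2^{(k-j)s}\,2^{js}$, and the geometric decay in $j-k\ge -N$ (this is where $s>0$ enters) turns the sum over $j$ into an $\ell^1$-convolution against an $\ell^2$ sequence.

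The genuine gap is the term $R(f,\Lambda^s g)=\sum_j\widetilde{\Delta}_j f\,\Delta_j\Lambda^s g$ hidden inside $f\Lambda^s g-T_f\Lambda^s g$, which you dismiss as ``handled similarly by distributing $\Lambda^s$ onto $f$.'' Here $\Lambda^s$ hits $g$ \emph{before} the product, so the best block-wise bounds are $\|\widetilde{\Delta}_j f\,\Delta_j\Lambda^s g\|_{L^2}\lesssim 2^{js}\|\widetilde{\Delta}_j f\|_{L^2}\|g\|_{L^\infty}=c_j\|g\|_{L^\infty}$ or $\lesssim\|f_x\|_{L^\infty}\|\Delta_j\Lambda^{s-1}g\|_{L^2}$, with only $\ell^2$ control of the coefficients; the Fourier support of each block is the full ball $B(0,C2^j)$ rather than an annulus, so the output at frequency $2^k$ collects all $j\ge k-N$ with no gain in $j-k$, and the resulting tail sums $\sum_{j\ge k-N}c_j$ of an $\ell^2$ sequence need not even be finite. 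Regrouping with $\Lambda^s R(f,g)$ as a commutator does not rescue small $s$ either, because $(1+|\xi|^2)^{s/2}$ restricted to a ball containing the origin is not a uniformly integrable rescaled kernel for $s\le 1$. This high--high piece is precisely the crux of the Kato--Ponce estimate: the original proof rewrites $\Delta_j\Lambda^s g$ in the region $|\xi|\sim|\eta|$ so as to transfer $\Lambda^s$ onto $f$ through a bounded bilinear symbol and then invokes the Coifman--Meyer multiplier theorem ($L^2\times L^\infty\to L^2$), a nonelementary input that Bernstein plus almost-orthogonality cannot replace. Until you supply that step (or an equivalent), the argument is incomplete.
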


{\bf
Notations.}
Since all space of functions in the following sections are over $\mathbb{R},$ for simplicity, we drop $\mathbb{R}$ in our notations of function spaces if there is no ambiguity.

\section{Local well-posedness}
In this section, we establish local well-posedness of (\ref{E2}) in the Besov spaces. Our main result can be stated as follows. To introduce the main result, we define
 \begin{align}
   E^s_{p,r}(T)\triangleq
\left\{
\begin{array}{ll}
C([0,T);B^s_{p,r})\cap C^1([0,T);B^{s-1}_{p,r}),~~~~if~r<\infty, \\[1ex]
C_w([0,T);B^s_{p,\infty})\cap C^{0,1}([0,T);B^{s-1}_{p,\infty}),~~~~if~r=\infty.\\[1ex]
\end{array}
\right.
\end{align}
\begin{theo}\label{Thm1}
Let $1\leq p,~r\leq \infty,~s>\max\{\frac{1}{p},\frac{1}{2}\},$ and  $~m_0\in B^s_{p,r}.$ Then there exists some $T>0$, such that (\ref{E2}) has a unique solution $u$ in
$ E^s_{p,r}(T).$ Moveover the solution depends continuously on the initial data $m_{0}.$
\end{theo}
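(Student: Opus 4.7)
The plan is to follow the classical Kato/Danchin iteration scheme for quasilinear transport equations in Besov spaces. First I would rewrite \eqref{E2} as a transport equation with forcing,
\begin{equation*}
m_t + (2u_x-4u)\,m_x \;=\; 2m^2 + (8u_x-4u)\,m + 2(u+u_x)^2,\qquad u = (1-\partial_x^2)^{-1}m,
\end{equation*}
so that the transport velocity is $v := 2u_x-4u$, which gains one derivative compared to $m$ (indeed $\|v\|_{B^{s}_{p,r}}\lesssim \|m\|_{B^{s-1}_{p,r}}$). This form makes Lemma~\ref{est1} directly applicable.

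Next I would construct approximate solutions $(m^{(n)})_{n\ge 0}$ by Friedrichs-type regularization: set $m^{(0)}:=S_0 m_0$, and define $m^{(n+1)}$ as the solution of the linear transport equation
\begin{equation*}
\partial_t m^{(n+1)} + (2u^{(n)}_x-4u^{(n)})\,\partial_x m^{(n+1)} \;=\; 2(m^{(n)})^2 + (8u^{(n)}_x-4u^{(n)})\,m^{(n)} + 2(u^{(n)}+u^{(n)}_x)^2,
\end{equation*}
with data $m^{(n+1)}(0)=S_{n+1}m_0$. Existence at each step is guaranteed by Lemma~\ref{est2}. To get a uniform bound in $L^\infty_T(B^s_{p,r})$ I would apply estimate \eqref{10} of Lemma~\ref{est1}, combined with the algebra/Moser estimates of Corollary~\ref{est3} and Lemma~\ref{Morse} to control the right-hand side by a polynomial $P(\|m^{(n)}\|_{B^s_{p,r}})$. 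Then a standard induction on $n$, with $T=T(\|m_0\|_{B^s_{p,r}})$ chosen small enough so that a Bernoulli-type ODE $y'\le C(1+y^2)$ stays bounded on $[0,T]$, yields $\sup_n\|m^{(n)}\|_{L^\infty_T(B^s_{p,r})}<\infty$.

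Then I would show $(m^{(n)})$ is a Cauchy sequence in the lower-regularity space $L^\infty_T(B^{s-1}_{p,r})$. Writing the equation satisfied by $\delta^{(n)}:=m^{(n+1)}-m^{(n)}$, the commutator from changing the transport velocity and the differences of the quadratic source terms are all handled by Lemma~\ref{Morse}; the hypothesis $s>\max\{\tfrac{1}{p},\tfrac12\}$ ensures $s-1$ is still in the range of validity of Lemma~\ref{est1}. Gronwall applied to the resulting inequality produces geometric convergence for $T$ small. The limit $m\in L^\infty_T(B^{s-1}_{p,r})$ combined with Remark~2.6(iii) gives $m\in L^\infty_T(B^s_{p,r})$; interpolation (Lemma~\ref{3}) lets me pass to the limit in the nonlinear terms in every intermediate space $B^{s'}_{p,r}$, $s'<s$. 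Reading the equation then shows $\partial_t m\in L^\infty_T(B^{s-1}_{p,r})$, which together with the Bona--Smith continuity trick upgrades $m$ to $C([0,T);B^s_{p,r})\cap C^1([0,T);B^{s-1}_{p,r})$ when $r<\infty$, and to the weak analogue when $r=\infty$, i.e. $u\in E^s_{p,r}(T)$. Uniqueness follows by running the same difference estimate on two solutions with identical data.

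The hardest part, as usual in this framework, is the continuous dependence on the initial data: the iteration only gives continuity of the flow map into $B^{s-1}_{p,r}$, while the claim demands continuity into $B^s_{p,r}$. I would handle this by the Bona--Smith regularization argument: approximate $m_0$ by $S_n m_0\in B^\infty_{p,r}$, use the uniform bound and the stability estimate in $B^{s-1}_{p,r}$ together with the Lemma~\ref{3} interpolation inequality to control the difference of solutions in $B^s_{p,r}$ by a quantity that vanishes as the data converge. Minor care is needed at the borderline exponent $s=1+\tfrac{1}{p}$, where one uses the variant with $r=1$ or works in $B^{s'}_{p,r}$ for $s'$ slightly below $s$ and then passes to the limit. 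Aside from this, the whole argument is structurally identical to the Camassa--Holm proof in \cite{d1}, with the extra algebraic nonlinearity $(u+u_x)^2$ and the quadratic term $m^2$ absorbed trivially by the algebra property of $B^s_{p,r}$.
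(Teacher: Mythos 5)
Your proposal follows essentially the same route as the paper: the same linearized iteration $\partial_t m^{(n+1)}+(2u^{(n)}_x-4u^{(n)})\partial_x m^{(n+1)}=F(m^{(n)},u^{(n)})$ with data $S_{n+1}m_0$, uniform bounds from Lemma~\ref{est1} plus the product estimates, a Cauchy argument in $L^\infty_T(B^{s-1}_{p,r})$, the Fatou property and interpolation to recover $m\in E^s_{p,r}(T)$, and uniqueness by the same difference estimate. The one genuine divergence is the last step. For continuous dependence you invoke the Bona--Smith regularization to get continuity of the data-to-solution map into $B^s_{p,r}$ itself; the paper does not do this. Its Step~6 only interpolates the $B^{s-1}_{p,r}$ stability estimate \eqref{19} against the uniform $B^s_{p,r}$ bound, which yields continuity in $B^{s'}_{p,r}$ for every $s'<s$ but stops short of the endpoint $s'=s$. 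So your version, if carried out, proves a strictly stronger form of continuous dependence at the cost of the extra Bona--Smith machinery (smooth approximation of the data, a second set of estimates at regularity $s+1$ for the regularized solutions, and a three-term triangle-inequality argument); the paper's version is shorter but weaker. One small technical remark: at low regularity ($\max\{\tfrac12,\tfrac1p\}<s\le 1+\tfrac1p$) the estimate at level $s-1$ is not literally an instance of Lemma~\ref{est1} applied to the index $s-1$ (the constraint $s\ge -d\min(\tfrac1{p_1},\tfrac1{p'})$ and the borderline case need care); the paper covers this by citing the tailored a priori estimates of Lemmas~\ref{est0} and~\ref{est3} from \cite{Luo-Yin}, and you would need to do the same rather than rely on Lemma~\ref{est1} alone.
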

\begin{proof}
In order to prove Theorem \ref{Thm1}, we proceed as the following six steps.\\
 {\bf Step 1:} First, we construct approximate solutions which are smooth solutions of some linear equations. Starting for $m_0(t,x)\triangleq m(0,x)=m_0$, we define by induction sequences $(m_{n})_{n\in\mathbb{N}}$  by solving the following linear transport equations:
 \begin{align}\label{E00}
\left\{
\begin{array}{ll}
 \partial_{t}m_{n+1}-(4u_{n}-2\partial_{x}u_{n})\partial_{x}m_{n+1}&=2m_{n}^2
 +(8\partial_xu_n-4u_n)m_{n}+2(u_{n}+\partial_xu_{n})^2\\&=F(m_{n},u_{n}),\\[1ex]
 m_{n+1}(t,x)|_{t=0}=S_{n+1}m_{0}.\\[1ex]
\end{array}
\right.
\end{align}

We assume that $m_n\in L^{\infty}(0,T;B^{s}_{p,r}),~~s>\max\{\frac{1}{p},\frac{1}{2}\}$.

Since $s>\max\{\frac{1}{p},\frac{1}{2}\}$, it follows that $B^{s}_{p,r}$ is an algebra, which leads to
$F(m_{n},u_{n})\in L^{\infty}(0,T;B^{s}_{p,r})$.
Hence, by Lemma \ref{est2} and the high regularity of $u$, (\ref{E00}) has a global solution $m_{n+1}$ which belongs to $E^s_{p,r}$ for all positive $T$.

{\bf Step 2:} Next, we are going to find some positive $T$ such that for this fixed $T$ the approximate solutions are uniformly bounded on $[0,T]$.
We define that
$U_{n}(t)\triangleq\int^{t}_{0}\|m_n(t')\|_{B^{s}_{p,r}}dt'$. By Lemma \ref{est1} and Lemma \ref{est3}, we infer that
\begin{align}\label{11}
\|m_{n+1}\|_{B^{s}_{p,r}}
\nonumber\leq& e^{C\int_{0}^{t}\|\partial_{x}(4u_n-2\partial_{x}u_n)\|_{B_{p,r}^{s}}}
\bigg(\|S_{n+1}m_0\|_{B^s_{p,r}}
\\\nonumber&+\int^t_0 e^{-C\int_{0}^{t'}\|\partial_{x}(4u_n-2\partial_{x}u_n)\|_{B_{p,r}^{s}}}
\|F(m_{n},u_{n})\|_{B^s_{p,r}}dt'\bigg)
\\ \leq& e^{CU_{n}(t)}\bigg(\|S_{n+1}m_0\|_{B^s_{p,r}}
+\int^t_0e^{-CU_{n}(t')}\|F(m_{n},u_{n})\|_{B^s_{p,r}}dt'\bigg).
\end{align}
Since $s>\frac{1}{p}$, $B^{s}_{p,r}$ is an algebra and $B^{s}_{p,r} \hookrightarrow L^{\infty}$, we deduce that
\begin{align}\label{12}
&\nonumber\|2m_{n}^2+(8\partial_xu_n-4u_n)m_{n}+2(u_{n}+\partial_xu_{n})^2\|_{B^{s}_{p,r}}
\\\nonumber \leq&
\|2m_{n}^2\|_{B^{s}_{p,r}}+\|(8\partial_xu_n-4u_n)m_{n}\|_{B^{s}_{p,r}}
+\|2(u_{n}+\partial_xu_{n})^2\|_{B^{s}_{p,r}}
\\\nonumber \leq&
C\|m_{n}\|_{B^{s}_{p,r}}\|m_{n}\|_{L^{\infty}}+C\|m_{n}\|_{B^{s}_{p,r}}\|8\partial_xu_n-4u_n\|_{L^{\infty}}
\\\nonumber&+C\|8\partial_xu_n-4u_n\|_{B^{s}_{p,r}}\|m_{n}\|_{L^{\infty}}
+C\|u_{n}+\partial_xu_{n}\|_{B^{s}_{p,r}}\|u_{n}+\partial_xu_{n}\|_{L^{\infty}}
\\ \leq&C\|m_{n}\|^2_{B^{s}_{p,r}}.
\end{align}
Plugging (\ref{12}) into (\ref{11}), we obtain
\begin{align}\label{14}
\nonumber\|m_{n+1}\|_{B^{s}_{p,r}}\leq& e^{CU_{n}(t)}\bigg(\|S_{n+1}m_0\|_{B^s_{p,r}}
+C\int^t_0e^{-CU_{n}(t')}\|m_n\|^2_{B^{s}_{p,r}}dt'\bigg)
\\\leq& e^{CU_{n}(t)}\bigg(C\|m_0\|_{B^s_{p,r}}
+C\int^t_0e^{-CU_{n}(t')}\|m_n\|^2_{B^{s}_{p,r}}dt'\bigg),
\end{align}
where we take $C\geq1.$
\\We fix a $T>0$ such that $2C^2T\|m_0\|_{B^{s}_{p,r}}<1.$ Suppose that
\begin{align}\label{013}
\|m_{n}(t)\|_{B^{s}_{p,r}}\leq \frac{C\|m_0\|_{B^s_{p,r}}}{1-2C^2\|m_0\|_{B^s_{p,r}}t}
\leq \frac{C\|m_0\|_{B^s_{p,r}}}{1-2C^2\|m_0\|_{B^s_{p,r}}T}\triangleq \mathbf{M},~~~~\forall t\in[0,T].
\end{align}

Since $U_{n}(t)=\int^{t}_{0}\|m_n(\tau)\|_{B^{s}_{p,r}}d\tau$, it follows that
\begin{align}\label{13}
e^{CU_{n}(t)-CU_{n}(t')}\nonumber\leq& \exp\bigg{\{}\int^{t}_{t'}\frac{C^2\|m_0\|_{B^s_{p,r}}}{1-2C^2\|m_0\|_{B^s_{p,r}}t} d\tau \bigg{\}}
\\\nonumber\leq&\exp\bigg{\{}-\frac{1}{2}\int^{t}_{t'}
\frac{d(1-2C^2\tau\|m_0\|_{B^s_{p,r}})}{1-2C^2\tau\|m_0\|_{B^s_{p,r}}} \bigg{\}}
\\=& (\frac{1-2C^2t'\|m_0\|_{B^s_{p,r}}}{1-2C^2t\|m_0\|_{B^s_{p,r}}})^{\frac{1}{2}}.
\end{align}
Set $U_{n}(t')=0$ when $t'=0$. We obtain
\begin{align}\label{14}
e^{CU_{n}(t)}\nonumber=& \exp\bigg{\{}C^2\int^{t}_{0}\frac{\|m_0\|_{B^s_{p,r}}}
{1-2C^2\|m_0\|_{B^s_{p,r}}\tau} d\tau \bigg{\}}
\\\nonumber\leq&\exp\bigg{\{}-\frac{1}{2}\int^{t}_{0}
\frac{d(1-2C^2\tau\|m_0\|_{B^s_{p,r}})}{1-2C^2\tau\|m_0\|_{B^s_{p,r}}} \bigg{\}}
\\=& (\frac{1}{1-2C^2t\|m_0\|_{B^s_{p,r}}})^{\frac{1}{2}}.
\end{align}
By using (\ref{013}), (\ref{13}) and (\ref{14}), we have
\begin{align}\label{8}
\|m_{n+1}(t)\|_{B^{s}_{p,r}}\nonumber &\leq Ce^{CU_{n}(t)}\|m_0\|_{_{B^s_{p,r}}}
+C\int^t_0e^{CU_{n}(t)-CU_{n}(t')}\|m_n(t')\|_{B^s_{p,r}}^{2}dt'
\\\nonumber& \leq (\frac{1}{1-2C^2t\|m_0\|_{B^s_{p,r}}^{2}})^{\frac{1}{2}}
\bigg{\{}C\|m_0\|_{B^s_{p,r}}+\int^t_0(\frac{C^3\|m_0\|^{2}_{B^s_{p,r}}}
{(1-2C^2t'\|m_0\|_{B^s_{p,r}})^{1+\frac{1}{2}}})dt'\bigg{\}}
\\\nonumber& \leq  (\frac{1}{1-2bC^2t\|m_0\|_{B^s_{p,r}}})^{\frac{1}{2}}
\bigg{\{}C\|m_0\|_{B^s_{p,r}}-\frac{C\|m_0\|_{B^s_{p,r}}}{2}
\int^t_0\frac{d(1-2C^2t'\|m_0\|_{B^s_{p,r}})}
{(1-2C^2t'\|m_0\|_{B^s_{p,r}})^{1+\frac{1}{2}}}\bigg{\}}
\\\nonumber&  \leq(\frac{1}{1-2C^2t\|m_0\|_{B^s_{p,r}}})^{\frac{1}{2}}
\bigg{\{}C\|m_0\|_{B^s_{p,r}}+C\|m_0\|_{B^s_{p,r}}
(\frac{1}{1-2C^2t'\|m_0\|_{B^s_{p,r}}})^{\frac{1}{2}}|^t_0\bigg{\}}
\\\nonumber&= \frac{C\|m_0\|_{B^s_{p,r}}}{1-2bC^2t\|m_0\|_{B^s_{p,r}}}
\\&\leq \frac{C\|m_0\|_{B^s_{p,r}}}{1-2C^2T\|m_0\|_{B^s_{p,r}}}=\mathbf{M}.
\end{align}
Thus, $(m_{n})_{n \in \mathbb{N}}$ is uniformly bounded in $L^{\infty}(0,T; B^s_{p,r})$.

{\bf Step 3:} From now on, we are going to prove that $u_n$ is a Cauchy sequence
 in $ L^{\infty}(0,T;B^{s-1}_{p,r})$. For this purpose, we deduce from (\ref{E00}) that

\begin{align}\label{E01}
\left\{
\begin{array}{ll}
&\partial_{t}(m_{n+l+1}-m_{n+1})-(4u_{n+l}-2\partial_{x}u_{n+l})\partial_{x}(m_{n+l+1}-m_{n+1})
 \\&=(u_{n+l}-u_{n})(\partial_{x}R_{n,l}^{1}+R_{n,l}^{2})
 +\partial_{x}(u_{n+l}-u_{n})
 (\partial_{x}R_{n,l}^{3}+R_{n,l}^{4})
 \\&~~+(m_{n+l}-m_{n})R_{n,l}^{5},\\[1ex]
 &m_{n+l+1}(t,x)-m_{n+1}(t,x)|_{t=0}=(S_{n+l+1}-S_{n+1})m_{0},\\[1ex]
\end{array}
\right.
\end{align}
where
\begin{align}
\nonumber&R_{n,l}^{1}=4m_{n+1}+2u_{n+l}+2u_{n},
\\\nonumber&R_{n,l}^{2}=-4m_{n}+2u_{n+l}+2u_{n},
\\\nonumber&R_{n,l}^{3}=-2m_{n+1}+2u_{n+l}+2u_{n},
\\\nonumber&R_{n,l}^{4}=8m_{n}+2u_{n+l}+2u_{n},
\\\nonumber&R_{n,l}^{5}=2m_{n+l}+2m_{n}+8\partial_{x}u_{n+l}-4u_{n+l}.
\end{align}

By  Lemma \ref{est0} and Lemma \ref{est3} and using the fact that $m_n$ is bounded in $L^{\infty}(0,T;B^{s}_{p,r})$, we infer that
\begin{align}\label{015}
\nonumber\|m_{n+m+1}(t)-m_{n+1}(t)\|_{B^{s-1}_{p,r}}\leq& C\bigg(\|(S_{n+m+1}-S_{n+1})m_0\|_{B^{s-1}_{p,r}}
\\\nonumber&+\int^t_0
\|(u_{n+l}-u_{n})\partial_{x}R_{n,l}^{1}\|_{B^{s-1}_{p,r}}
+\|(u_{n+l}-u_{n})R_{n,l}^{2}\|_{B^{s-1}_{p,r}}
\\\nonumber&+\|\partial_{x}(u_{n+l}-u_{n})\partial_{x}R_{n,l}^{3}\|_{B^{s-1}_{p,r}}
+\|\partial_{x}(u_{n+l}-u_{n})R_{n,l}^{4}\|_{B^{s-1}_{p,r}}
\\&+\|(m_{n+l}-m_{n})R_{n,l}^{5}\|_{B^{s-1}_{p,r}}dt'\bigg).
\end{align}

Applying Lemma \ref{Morse} with $d=1$, we have
\begin{align}\label{016}
\|(u_{n+l}-u_{n})\partial_{x}R_{n,l}^{1}\|_{B^{s-1}_{p,r}}\nonumber&\leq
\|u_{n+l}-u_{n}\|_{B^{s}_{p,r}}\|\partial_{x}(4m_{n+1}+2u_{n+l}+2u_{n})\|_{B^{s-1}_{p,r}}\\\nonumber&\leq
\|m_{n+l}-m_{n}\|_{B^{s-1}_{p,r}}\|4m_{n+1}-2u_{n+l}-2u_{n}\|_{B^{s}_{p,r}}\\\nonumber&\leq
C\|m_{n+l}-m_{n}\|_{B^{s-1}_{p,r}}(\|m_{n+1}\|_{B^{s}_{p,r}}
+\|u_{n+l}\|_{B^{s}_{p,r}}+\|u_{n}\|_{B^{s}_{p,r}})
\\&\leq C\mathbf{M}\|m_{n+l}-m_{n}\|_{B^{s-1}_{p,r}},
\\
\|(u_{n+l}-u_{n})R_{n,l}^{2}\|_{B^{s-1}_{p,r}}&\nonumber\leq
\|u_{n+l}-u_{n}\|_{B^{s-1}_{p,r}}\|-4m_{n}+2u_{n+l}+2u_{n}\|_{B^{s}_{p,r}}
\\\nonumber&\leq C\|m_{n+l}-m_{n}\|_{B^{s-1}_{p,r}}(\|m_{n}\|_{B^{s}_{p,r}}
+\|u_{n+l}\|_{B^{s}_{p,r}}+\|u_{n}\|_{B^{s}_{p,r}})
\\&\leq C\mathbf{M}\|m_{n+l}-m_{n}\|_{B^{s-1}_{p,r}},
\\
\|\partial_{x}(u_{n+l}-u_{n})\partial_{x}R_{n,l}^{3}\|_{B^{s-1}_{p,r}}\nonumber&\leq
\|\partial_{x}(u_{n+l}-u_{n})\|_{B^{s}_{p,r}}\|\partial_{x}(-2m_{n+1}+2u_{n+l}+2u_{n})\|_{B^{s-1}_{p,r}}
\\ \nonumber&\leq
 C\|m_{n+l}-m_{n}\|_{B^{s-1}_{p,r}}(\|m_{n+1}\|_{B^{s}_{p,r}}
+\|u_{n+l}\|_{B^{s}_{p,r}}+\|u_{n}\|_{B^{s}_{p,r}})
\\&\leq C\mathbf{M}\|m_{n+l}-m_{n}\|_{B^{s-1}_{p,r}},
\\
\|\partial_{x}(u_{n+l}-u_{n})R_{n,l}^{4}\|_{B^{s-1}_{p,r}}\nonumber&\leq
\|\partial_{x}(u_{n+l}-u_{n})\|_{B^{s-1}_{p,r}}\|8m_{n}+2u_{n+l}+2u_{n}\|_{B^{s}_{p,r}}
\\ \nonumber&\leq
 C\|m_{n+l}-m_{n}\|_{B^{s-1}_{p,r}}(\|m_{n}\|_{B^{s}_{p,r}}
+\|u_{n+l}\|_{B^{s}_{p,r}}+\|u_{n}\|_{B^{s}_{p,r}})
\\&\leq C\mathbf{M}\|m_{n+l}-m_{n}\|_{B^{s-1}_{p,r}},
\end{align}
\begin{align}\label{017}
\|(m_{n+l}-m_{n})R_{n,l}^{5}\|_{B^{s-1}_{p,r}}\nonumber\leq&
\|m_{n+l}-m_{n})\|_{B^{s-1}_{p,r}}\|2m_{n+l}+2m_{n}+8\partial_{x}u_{n+l}-4u_{n+l}\|_{B^{s}_{p,r}}
\\\nonumber \leq
& C\|m_{n+l}-m_{n}\|_{B^{s-1}_{p,r}}(\|m_{n+l}\|_{B^{s}_{p,r}}
+\|\partial_{x}u_{n+l}\|_{B^{s}_{p,r}}+\|u_{n+l}\|_{B^{s}_{p,r}})
\\ \nonumber\leq
& C\|m_{n+l}-m_{n}\|_{B^{s-1}_{p,r}}\|m_{n+l}\|_{B^{s}_{p,r}}
\\ \leq& C\mathbf{M}\|m_{n+l}-m_{n}\|_{B^{s-1}_{p,r}}.
\end{align}
Plugging (\ref{016})-(\ref{017}) into (\ref{015}) yields that
\begin{align}
\nonumber\|m_{n+l+1}(t)-m_{n+1}(t)\|_{B^{s-1}_{p,r}}\leq C_{T}\bigg(\|(S_{n+l+1}-S_{n+1})m_0\|_{B^{s-1}_{p,r}}\\
+\int^t_0 C\mathbf{M}\|m_{n+l}-m_{n}\|_{B^{s-1}_{p,r}}dt'\bigg).
\end{align}

Since
$$\|\sum_{q=n+1}^{n+l}\Delta_{q}m_{0}\|_{B^{s-1}_{p,r}}\leq C2^{-n}\|m_{0}\|_{B^{s-1}_{p,r}} , $$ and that
$(m_n)_{n\in\mathbb{N}}$ is uniformly bounded in $L^{\infty}([0,T];B^{s}_{p,r})$, then it follows that
$$\|m_{n+l+1}(t)-m_{n+1}(t)\|_{B^{s-1}_{p,r}}\leq
C_T(2^{-n}+\int^{t}_{0}\|m_{n+l}-m_{n}\|_{B^{s-1}_{p,r}}d\tau).$$
It is easily checked by induction
$$\|m_{n+l+1}-m_{n+1}\|_{L^{\infty}(0,T; B^{s-1}_{p,r})}\leq \frac{(TC_T)^{n+1}}{(n+1)!}\|m_l-m_{0}\|_{L^{\infty}(0,T; B^{s-1}_{p,r})}
+C_T2^{-n}\sum_{k=1}^{n}2^{k}\frac{(TC_T)^{k}}{k!}.$$
Since $\|m_n\|_{L^{\infty}(0,T; B^{s-1}_{p,r})}$ is bounded independently of $n$, we can find a new constant $C'_T$ such that
$$\|m_{n+l+1}-m_{n+1}\|_{L^{\infty}(0,T; B^{s-1}_{p,r})}\leq C'_T2^{-n}.$$
Consequently, $(m_n)_{n\in\mathbb{N}}$ is a Cauchy sequence in $L^\infty(0,T;B^{s-1}_{p,r})$.
 Moreover it converges to some limit function $m\in L^\infty(0,T;B^{s-1}_{p,r}).$

{\bf Step 4}: We now prove the existence of solution. We prove that $m$ belongs to $E^{s}_{p,r}$ and satisfies (\ref{E2}) in the sense of distribution.
Since $(m_n)_{n\in\mathbb{N}}$ is uniformly bounded in $L^\infty(0,T;B^{s}_{p,r})$, the Fatou property for the Besov spaces guarantees that $m \in L^\infty(0,T;B^{s}_{p,r})$.
\\If $s'\leq s-1,$ then
\begin{align}\label{4}
\|m_n-m\|_{B^{s'}_{p,r}}\leq C\|m_n-m\|_{B^{s-1}_{p,r}}.
\end{align}
If $s-1\leq s'<s,$ by using Lemma \ref{3}, we have
\begin{align}\label{5}
\|m_n-m\|_{B^{s'}_{p,r}}\nonumber &\leq C\|m_n-m\|^\theta_{B^{s-1}_{p,r}}
\|m_n-m\|^{1-\theta}_{B^{s}_{p,r}}\\&
\leq C\|m_n-m\|^\theta_{B^{s-1}_{p,r}}(\|m_n\|_{B^{s}_{p,r}}+\|m\|_{B^{s}_{p,r}})^{1-\theta},
\end{align}
where $\theta=s-s'$.
Combining (\ref{4}) with (\ref{5}) for all $s'< s$, we have that $(m_n)_{n\in\mathbb{N}}$ converges to $m$ in $L^\infty([0,T];B^{s'}_{p,r})$. Taking limit in (\ref{E00}), we conclude that $m$ is indeed a solution of (\ref{E2}). Note that $m\in L^\infty(0,T;B^{s}_{p,r}).$
Then
\begin{align}\label{18}
\nonumber&\|2m^2+(8\partial_xu-4u)m+2(u+\partial_xu)^2\|_{B^{s}_{p,r}}
\\\nonumber \leq&
\|2m^2\|_{B^{s}_{p,r}}+\|(8\partial_xu-4u)m\|_{B^{s}_{p,r}}
+\|2(u+\partial_xu)^2\|_{B^{s}_{p,r}}
\\\nonumber \leq&
C\|m\|_{B^{s}_{p,r}}\|m\|_{L^{\infty}}+C\|m\|_{B^{s}_{p,r}}\|8\partial_xu-4u\|_{L^{\infty}}
\\\nonumber&+C\|8\partial_xu-4u\|_{B^{s}_{p,r}}\|m\|_{L^{\infty}}
+C\|u+\partial_xu\|_{B^{s}_{p,r}}\|u+\partial_xu\|_{L^{\infty}}
\\ \leq&C\|m\|^2_{B^{s}_{p,r}}.
\end{align}
This shows that the right-hand side of (\ref{E2}) also belongs to $L^\infty(0,T;B^{s}_{p,r}).$
Hence, according to Lemma \ref{est2}, $m$ belongs to $C([0,T);B^s_{p,r})~ (resp.,C_w([0,T);B^s_{p,r}))$ if $r<\infty~(resp.,r=\infty)$. Lemma (\ref{Morse}) implies that $(4u-2u_x)m_x$ is bounded in $L^\infty(0,T;B^{s-1}_{p,r}).$
Again using the equation (\ref{E2}) and the high regularity of $u$, we see that $\partial_{t}u$ is in $ C([0,T);B^{s-1}_{p,r})$ if $r$ is finite. Apparently, we know that $m\in E^{s}_{p,r}.$

{\bf Step 5}: Finally, we prove the uniqueness and stability of solutions to (\ref{E2}). Suppose that $M=(1-\partial_x^2)u,~N=(1-\partial_x^2)v \in E^{s}_{p,r}$ are two solutions of (\ref{E2}).
Set $W=M-N.$ Hence, we obtain that

\begin{align}
\left\{
\begin{array}{ll}
&\partial_{t}W-(4u-2\partial_{x}u)\partial_{x}W
 \\&=(u-v)(\partial_{x}G^{1}+G^{2})
 +\partial_{x}(u-v)
 (\partial_{x}G^{3}+G^{4})+W G^{5},\\[1ex]
&W(t,x)|_{t=0}=M(0)-N(0)=W(0),\\[1ex]
\end{array}
\right.
\end{align}
where
\begin{align}
\nonumber&G^{1}=4N+2u+2v,
\\\nonumber&G^{2}=-4N+2u+2v,
\\\nonumber&G^{3}=-2N+2u+2v,
\\\nonumber&G^{4}=8N+2u+2v,
\\\nonumber&G^{5}=2M+2N+8\partial_{x}u-4u.
\end{align}

We define that
$U(t)\triangleq\int^{t}_{0}\|m(t')\|_{B^{s}_{p,r}}dt'$. By (\ref{10}) of Lemma \ref{est1} and using the fact that $m$ is bounded in $L^{\infty}(0,T;B^{s}_{p,r})$, we infer that
\begin{align}\label{15}
\nonumber\|W\|_{B^{s-1}_{p,r}}\leq& Ce^{CU(t)}\bigg{(}\|W(0)\|_{B^{s-1}_{p,r}}+\int^t_0
e^{-CU(t')}(\|(u-v)\partial_{x}G^{1}\|_{B^{s-1}_{p,r}}
+\|(u-v)G^{2}\|_{B^{s-1}_{p,r}}
\\&+\|\partial_{x}(u-v)\partial_{x}G^{3}\|_{B^{s-1}_{p,r}}
+\|\partial_{x}(u-v)G^{4}\|_{B^{s-1}_{p,r}}+\|WG^{5}\|_{B^{s-1}_{p,r}})dt'\bigg{)}.
\end{align}

Taking advantage of Lemma \ref{Morse} with $d=1$, we have
\begin{align}\label{16}
\|(u-v)\partial_{x}G^{1}\|_{B^{s-1}_{p,r}}\nonumber&\leq
\|u-v\|_{B^{s}_{p,r}}\|\partial_{x}(4N+2u+2v)\|_{B^{s-1}_{p,r}}\\\nonumber&\leq
\|W\|_{B^{s-1}_{p,r}}\|4N+2u+2v\|_{B^{s}_{p,r}}\\\nonumber&\leq
C\|W\|_{B^{s-1}_{p,r}}(\|N\|_{B^{s}_{p,r}}
+\|u\|_{B^{s}_{p,r}}+\|v\|_{B^{s}_{p,r}})
\\&\leq C\mathbf{M}\|W\|_{B^{s-1}_{p,r}},
\\
\|(u-v)R_{n,l}^{2}\|_{B^{s-1}_{p,r}}&\nonumber\leq
\|u-v\|_{B^{s-1}_{p,r}}\|-4N+2u+2v\|_{B^{s}_{p,r}}
\\\nonumber&\leq C\|W\|_{B^{s-1}_{p,r}}(\|N\|_{B^{s}_{p,r}}
+\|u\|_{B^{s}_{p,r}}+\|v\|_{B^{s}_{p,r}})
\\&\leq C\mathbf{M}\|W\|_{B^{s-1}_{p,r}},
\\
\|\partial_{x}(u-v)\partial_{x}G^{3}\|_{B^{s-1}_{p,r}}&\nonumber\leq
\|\partial_{x}(u-v)\|_{B^{s}_{p,r}}\|\partial_{x}(-2N+2u+2v)\|_{B^{s-1}_{p,r}}
\\ \nonumber&\leq
 C\|W\|_{B^{s-1}_{p,r}}(\|N\|_{B^{s}_{p,r}}
+\|u\|_{B^{s}_{p,r}}+\|v\|_{B^{s}_{p,r}})
\\ &\leq C\mathbf{M}\|W\|_{B^{s-1}_{p,r}},
\\
\|\partial_{x}(u-v)G^{4}\|_{B^{s-1}_{p,r}}\nonumber&\leq
\|\partial_{x}(u-v)\|_{B^{s}_{p,r}}\|8N+2u+2v\|_{B^{s}_{p,r}}
\\ \nonumber&\leq
 C\|W\|_{B^{s-1}_{p,r}}(\|N\|_{B^{s}_{p,r}}
+\|u\|_{B^{s}_{p,r}}+\|v\|_{B^{s}_{p,r}})
\\ &\leq C\mathbf{M}\|W\|_{B^{s-1}_{p,r}},
\end{align}
\begin{align}\label{17}
\|WG^{5}\|_{B^{s-1}_{p,r}}\nonumber\leq&
\|W\|_{B^{s-1}_{p,r}}\|2M+2N+8\partial_{x}u-4u\|_{B^{s}_{p,r}}
\\\nonumber \leq
& C\|W\|_{B^{s-1}_{p,r}}(\|M\|_{B^{s}_{p,r}}+\|N\|_{B^{s}_{p,r}}
+\|\partial_{x}u\|_{B^{s}_{p,r}}+\|u\|_{B^{s}_{p,r}})
\\ \leq& C \mathbf{M}\|W\|_{B^{s-1}_{p,r}}.
\end{align}
Plugging (\ref{16})-(\ref{17}) into (\ref{15}) yields that
\begin{align}
e^{-CU(t)}\|W\|_{B^{s-1}_{p,r}}\leq C\|W(0)\|_{B^{s-1}_{p,r}}
+\int^t_0 C\mathbf{M}e^{-CU(t')}\|W\|_{B^{s-1}_{p,r}}dt'.
\end{align}

Applying Gronwall's inequality yields
\begin{align}\label{19}
\sup_{t\in[0,T)}\|W(t)\|_{B^{s-1}_{p,r}}\leq e^{\widetilde{C_{T}}}\|W(0)\|_{B^{s-1}_{p,r}}.
\end{align}
In particular, $u_0=v_0$ in (\ref{19}) yields $u(t)=v(t).$
\\{\bf Step 6}: Continuity with respect to the initial data.
If $s'=s-1$, by (\ref{19}), the conclusion is valid. If $s'<s-1$, by using Lemma \ref{3} and (\ref{19}),
we have
\begin{align}
\|M(t)-N(t)\|_{B^{s'}_{p,r}}\nonumber\leq &C\|M(t)-N(t)\|_{B^{s-1}_{p,r}}
\\\nonumber\leq&
e^{\widetilde{C_{T}}}\|M(0)-N(0)\|_{B^{s-1}_{p,r}}
\\\nonumber\leq& Ce^{\widetilde{C_{T}}}\|M(0)-N(0)\|^{\theta_1}_{B^{s'}_{p,r}}
\|M(0)-N(0)\|^{(1-\theta_1)}_{B^{s_1}_{p,r}}
\\\nonumber\leq& Ce^{\widetilde{C_{T}}}\|M(0)-N(0)\|^{\theta_1}_{B^{s'}_{p,r}}
(\|M(0)\|_{B^{s}_{p,r}}+\|N(0)\|_{B^{s}_{p,r}})^{(1-\theta_1)},
\end{align}
where $s-1=\theta_1s'+s_1(1-\theta_1),~s_1<s.$

If $s-1<s'<s$, by using Lemma \ref{3} and (\ref{19}) again, we get
\begin{align}
\|M(t)-N(t)\|_{B^{s'}_{p,r}}\nonumber\leq &C\|M(t)-N(t)\|^{\theta_2}_{B^{s-1}_{p,r}}\|M(t)-N(t)\|^{1-\theta_2}_{B^{s}_{p,r}}
\\\nonumber\leq& C\|M(t)-N(t)\|^{(1-\theta_2)}_{B^{s}_{p,r}}
e^{\widetilde{C_{T}}\theta_2}\|M(0)-N(0)\|^{\theta_2}_{B^{s-1}_{p,r}}
\\\nonumber\leq& Ce^{\widetilde{C_{T}}\theta_2}(\|M(t)\|_{B^{s}_{p,r}}+\|N(t)\|_{B^{s}_{p,r}})^{(1-\theta_2)}
\|M(0)-N(0)\|^{\theta_2}_{B^{s'}_{p,r}},
\end{align}
where $s'=\theta_2(s-1)+(1-\theta_2)s.$

Consequently, we complete the proof of Theorem \ref{Thm1}.

\end{proof}
\section{Blow-up}
After obtaining local well-posedness theory, a natural question is whether the corresponding solution exists globally in time or not. This section is devoted to the blow-up phenomena. We first show the following conservation law to (\ref{E1}).
\begin{lemm}\label{Thm2}
Let $u_0\in H^s,~s>\frac{5}{2}.$ Then the corresponding solution $u$ guaranteed by Theorem (\ref{Thm1}) has constant energy integral
$$\int_{\mathbb{R}}(u^2+u_x^2)dx
=\int_{\mathbb{R}}[u_0^2+(u_0')^2]dx=\|u_0\|^2_{H^1}.$$
\end{lemm}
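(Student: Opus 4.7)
The plan is to obtain the conservation law directly from the $m$-form of the equation in (\ref{E2}). First, using $m = u - u_{xx}$ and one integration by parts (justified because $u(t,\cdot) \in H^s$ with $s > 5/2$ guarantees decay of $u$, $u_x$, $u_{xx}$ at infinity and $u_{xxx} \in L^2$), I have
\begin{align*}
\int_{\mathbb{R}} u\, m_t\,dx \;=\; \int_{\mathbb{R}} u\, u_t\,dx - \int_{\mathbb{R}} u\, u_{txx}\,dx \;=\; \frac{1}{2}\frac{d}{dt}\int_{\mathbb{R}} (u^2 + u_x^2)\,dx.
\end{align*}
Hence it suffices to prove that the right-hand side of the second equation in (\ref{E2}), tested against $u$, integrates to zero:
\begin{align*}
\int_{\mathbb{R}} u\Bigl[2m^2 + (8u_x-4u)m + (4u-2u_x)m_x + 2(u+u_x)^2\Bigr]\,dx \;=\; 0.
\end{align*}

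Next I would substitute $m = u - u_{xx}$ everywhere and expand into monomials in $u, u_x, u_{xx}, u_{xxx}$. The purely polynomial terms of the form $\int u^3\,dx$ combine to $0$ coefficient-wise, while terms such as $\int u^2 u_x\,dx$ and $\int u_x^2 u_{xx}\,dx$ are exact derivatives of odd powers and vanish by the decay at infinity. The nontrivial residue collapses to two groups: a pair $-4\int u^2 u_{xxx}\,dx - 8\int u u_x u_{xx}\,dx$, which vanishes after integrating by parts once since $-4\int u^2 u_{xxx}\,dx = +8\int u u_x u_{xx}\,dx$; and the pair $2\int u(u_{xx}^2 + u_x u_{xxx})\,dx = 2\int u\,(u_x u_{xx})_x\,dx = -2\int u_x^2 u_{xx}\,dx = 0$.

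Combining these cancellations yields $\frac{d}{dt}\|u(t)\|_{H^1}^2 = 0$ on the maximal interval of existence provided by Theorem \ref{Thm1}, and integrating in time gives the stated identity. There is no genuine analytic obstacle here — all integrations by parts are legitimate at the regularity $s > 5/2$, and the argument can be further justified by a standard density argument approximating $u_0$ by Schwartz data if one prefers to avoid any concern about decay. The only subtle point is purely bookkeeping: the expansion produces roughly a dozen cubic terms, and it is important to organise them into the two total-derivative groups above so that the cancellation is transparent rather than buried in computation.
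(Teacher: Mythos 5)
Your proof is correct, and the cubic bookkeeping checks out: after substituting $m=u-u_{xx}$ the integrand reduces to $2uu_{xx}^2+16u^2u_x-8uu_xu_{xx}-4u^2u_{xxx}+2uu_xu_{xxx}$, and your two total-derivative groupings do annihilate it. However, you take a genuinely different route from the paper. The paper never expands $m$: it tests the factored form (\ref{E1}), $m_t=\partial_x(2+\partial_x)\bigl[(2-\partial_x)u\bigr]^2$, against $u$, uses that the formal adjoint of $\partial_x(2+\partial_x)$ is $-\partial_x(2-\partial_x)$, and lands in one step on $-2\int_{\mathbb{R}} v^2\,\partial_x v\,dx=0$ with $v=(2-\partial_x)u$. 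That argument is shorter and exposes the structural reason the energy is conserved (the nonlinearity is a perfect derivative of $v^3$ up to the skew part of the operator), whereas your expansion is mechanical but self-contained and does not require noticing the factorization. One small imprecision: for $\tfrac52<s<3$ you do not actually have $u_{xxx}\in L^2$, so the integrations by parts involving $u_{xxx}$ are not directly ``legitimate at the regularity $s>\tfrac52$''; the density approximation you mention in passing is in fact needed (the paper makes the same reduction to smooth data at the outset), so it should be the first step of the argument rather than an optional remark.
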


\begin{proof}
 Arguing by density, it suffices to consider the case where $u\in C^{\infty}_0.$
Applying integration by parts, we obtain
$$\int_{\mathbb{R}}um dx=\int_{\mathbb{R}}u(u- u_{xx}) dx=\int_{\mathbb{R}}u^2dx+\int_{\mathbb{R}} u_x^2dx.$$
By (\ref{E1}), we infer that
\begin{align}
\nonumber&\frac{d}{d_t}\int_{\mathbb{R}}um dx\\\nonumber&=\int_{\mathbb{R}}(\partial_tu m+\partial_tmu)dx
\\\nonumber&=2\int_{\mathbb{R}}\partial_tm udx
=2\int_{\mathbb{R}}u\partial_x(2+\partial_x)[(2-\partial_x)u]^2dx
\\\nonumber&=-2\int_{\mathbb{R}}[(2-\partial_x)u]^2\partial_x(2-\partial_x)udx
\\\nonumber&=-2\int_{\mathbb{R}}[(2-\partial_x)u]^2d(2-\partial_x)u
\\\nonumber&=0.
\end{align}
\end{proof}

It is easy to see that the Degasperis-Procesi equation transforms into the  equation (\ref{E1}) under the transformation
$$u\rightarrow 2(2-\partial_{x})u.$$

The following important estimate can be obtained by Lemma \ref{Thm2} and Lemma 3.2 in \cite{L-Y1}.

\begin{lemm}\label{Thm02}
Let $u_0\in H^s,~s>\frac{5}{2}.$ Let $T$ be the maximal existence time of the solution $u$ guaranteed by Theorem \ref{Thm1}.
Set $w=(2-\partial_{x})u$ and $w_{0}=(2-\partial_{x})u_{0}$.
Then we have
\begin{align}
\|w\|_{L^{\infty}}&\leq 6\|w_{0}(x)\|^2_{L^2}t+\|w_{0}\|_{L^{\infty}},~~~~~~~~\forall t\in [0,T].
\end{align}
\end{lemm}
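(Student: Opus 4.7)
The plan is to recast the problem as a Degasperis--Procesi--type equation for $w = (2-\partial_x)u$, for which the pointwise bound follows from a standard characteristic argument of the kind used in Lemma~3.2 of \cite{L-Y1}. First I derive the evolution equation for $w$: applying $(1-\partial_x^2)^{-1}$ to (\ref{E1}) gives $u_t=(1-\partial_x^2)^{-1}\partial_x(2+\partial_x)w^2$, and a short Fourier computation simplifies the symbol,
\begin{equation*}
(2-i\xi)\,\frac{i\xi(2+i\xi)}{1+\xi^2} \;=\; \frac{i\xi(4+\xi^2)}{1+\xi^2} \;=\; i\xi \;+\; \frac{3\,i\xi}{1+\xi^2}.
\end{equation*}
Writing $p(x)=\tfrac{1}{2}e^{-|x|}$ for the Green function of $(1-\partial_x^2)$, this produces the Degasperis--Procesi-type form
\begin{equation*}
w_t - 2w w_x \;=\; 3\,\partial_x(p*w^2),
\end{equation*}
consistent with the author's remark that, up to scaling and sign, $w$ solves the DP equation.

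Next I run the method of characteristics. Since $u\in H^s$ with $s>\tfrac52$, one has $w\in H^{s-1}\hookrightarrow W^{1,\infty}$, so the ODE $q_t(t,x)=-2w(t,q(t,x))$, $q(0,x)=x$, is well-posed on $[0,T]$ and $q(t,\cdot):\mathbb{R}\to\mathbb{R}$ is a $C^1$-diffeomorphism. Along a characteristic the equation becomes $\tfrac{d}{dt}w(t,q(t,x))=3\,\partial_x(p*w^2)(t,q(t,x))$, so
\begin{equation*}
|w(t,q(t,x))| \;\le\; |w_0(x)| + 3\int_0^t \|\partial_x(p*w^2)(s)\|_{L^\infty}\,ds.
\end{equation*}
Since $\|\partial_x p\|_{L^\infty}=\tfrac12$, Young's inequality yields $\|\partial_x(p*w^2)(s)\|_{L^\infty}\le \tfrac12\|w(s)\|_{L^2}^2$.

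To close the estimate I only need $\|w(s)\|_{L^2}^2$ in terms of the initial data. Expanding and noting that the cross term $\int u u_x\,dx$ vanishes,
\begin{equation*}
\|w(s)\|_{L^2}^2 \;=\; 4\|u(s)\|_{L^2}^2+\|u_x(s)\|_{L^2}^2 \;\le\; 4\|u(s)\|_{H^1}^2;
\end{equation*}
by Lemma~\ref{Thm2}, $\|u(s)\|_{H^1}=\|u_0\|_{H^1}$, and $\|w_0\|_{L^2}^2=4\|u_0\|_{L^2}^2+\|u_{0,x}\|_{L^2}^2\ge \|u_0\|_{H^1}^2$, so $\|w(s)\|_{L^2}^2\le 4\|w_0\|_{L^2}^2$ on $[0,T]$. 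Substituting and taking $\sup_x$ (justified by surjectivity of $q(t,\cdot)$) delivers
\begin{equation*}
\|w(t)\|_{L^\infty} \;\le\; \|w_0\|_{L^\infty} + 6\,\|w_0\|_{L^2}^2\, t.
\end{equation*}

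The only delicate point is the symbol calculation that reduces the composite operator $(2-\partial_x)(1-\partial_x^2)^{-1}\partial_x(2+\partial_x)$ to $\partial_x + 3\partial_x(1-\partial_x^2)^{-1}$; this is transparent in Fourier variables but would be awkward in physical space. Everything downstream is a textbook application of characteristics and Young's inequality, which is why the author dispatches the argument by appealing to Lemma~3.2 of \cite{L-Y1} combined with the conservation law of Lemma~\ref{Thm2}.
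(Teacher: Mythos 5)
Your proof is correct and follows essentially the same route the paper intends: the paper dispatches this lemma by citing the transformation to the Degasperis--Procesi equation together with Lemma 3.2 of \cite{L-Y1} and the conservation law of Lemma \ref{Thm2}, and your argument is precisely that proof written out in full (the symbol computation giving $w_t-2ww_x=3\partial_x(p*w^2)$, the characteristic/Young estimate with $\|\partial_x p\|_{L^\infty}=\tfrac12$, and the bound $\|w(s)\|_{L^2}^2\le 4\|w_0\|_{L^2}^2$ via the conserved $H^1$ norm, which is exactly the role Lemma \ref{Thm2} plays). All constants check out and yield the stated coefficient $6$.
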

\begin{rema}\label{l21}
From Lemma \ref{Thm02}, we have
\begin{align}
\|u_{x}\|_{L^\infty}&\leq 54T\|u_{0}\|^2_{H^1}+4\|u_{0}\|_{H^1}+\|u_{0x}\|_{L^\infty}
\nonumber\\&\leq 54T\|u_{0}\|^2_{H^1}+5\|u_{0}\|_{H^\frac{3}{2}}.
\end{align}
\end{rema}

Then, we present a blow-up criterion for (\ref{E1}).
\begin{lemm}\label{Thm3}
Let $u_0(x)\in H^s(\mathbb{R}),~ s> \frac{5}{2},$ and let $T$ be the maximal existence time of the solution $u(x, t)$ to (\ref{E1}) with the initial data $u_0(x)$. Then the corresponding solution blows up in finite time if and only if
$$ \limsup_{t\rightarrow T}\int^{T}_{0}\|u_{xx}\|_{L^\infty}d\tau=\infty.$$
\end{lemm}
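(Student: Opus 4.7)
\medskip
\noindent\textbf{Proof plan for Lemma \ref{Thm3}.} The direction that no blow-up forces $\int_0^T\|u_{xx}\|_{L^\infty}\,d\tau<\infty$ is immediate by contrapositive: if the solution extends past $T$ in $H^s$ with $s>5/2$, then by Sobolev embedding $u_{xx}\in C([0,T];L^\infty)$, so the integral is finite. The plan is therefore to concentrate on the nontrivial implication, namely
$$\int_0^T\|u_{xx}\|_{L^\infty}\,d\tau<\infty\quad\Longrightarrow\quad\text{the solution can be continued past }T,$$
which by Theorem \ref{Thm1} (applied with $p=r=2$, so that $B^{s-2}_{2,2}=H^{s-2}$ with $s-2>1/2$) is equivalent to proving an a priori bound on $\|m(t)\|_{H^{s-2}}$ on $[0,T)$.

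\medskip
\noindent\emph{Control of low-order quantities.} Lemma \ref{Thm2} gives $\|u(t)\|_{H^1}=\|u_0\|_{H^1}$, whence $\|u(t)\|_{L^\infty}\le C\|u_0\|_{H^1}$ by the one-dimensional Sobolev embedding. Remark \ref{l21} yields $\|u_x(t)\|_{L^\infty}\le C(T,\|u_0\|_{H^{3/2}})$ on $[0,T]$. Since $m=u-u_{xx}$, this gives
$$\|m(t)\|_{L^\infty}\;\le\;\|u(t)\|_{L^\infty}+\|u_{xx}(t)\|_{L^\infty}\;\le\;C_0+\|u_{xx}(t)\|_{L^\infty},$$
so every $L^\infty$-quantity built from $u,u_x,m$ is integrable in $t$ under our hypothesis.

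\medskip
\noindent\emph{Energy estimate in $H^{s-2}$.} Setting $\sigma=s-2>1/2$, I apply $\Lambda^\sigma$ to the $m$-equation in (\ref{E2}), take the $L^2$ inner product with $\Lambda^\sigma m$, and handle each term separately. For the transport piece $(4u-2u_x)m_x$, I split
$$\int\Lambda^\sigma m\cdot\Lambda^\sigma\bigl[(4u-2u_x)m_x\bigr]\,dx
\;=\;\int\Lambda^\sigma m\cdot(4u-2u_x)\partial_x\Lambda^\sigma m\,dx
\;+\;\int\Lambda^\sigma m\cdot[\Lambda^\sigma,4u-2u_x]m_x\,dx.$$
The first integral is $-\tfrac{1}{2}\int(4u_x-2u_{xx})(\Lambda^\sigma m)^2\,dx$, controlled by $(\|u_x\|_{L^\infty}+\|u_{xx}\|_{L^\infty})\|m\|_{H^\sigma}^2$. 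The second integral is bounded by the Kato–Ponce estimate of Lemma \ref{com.est}, using that $(1-\partial_x^2)^{-1}$ gains two derivatives to absorb $\|4u-2u_x\|_{H^\sigma}\le C\|m\|_{H^{\sigma-1}}\le C\|m\|_{H^\sigma}$. For the algebraic terms I use the Moser-type estimate (Corollary \ref{est3}) together with the algebra property of $H^\sigma$ to obtain
$$\|2m^2+(8u_x-4u)m+2(u+u_x)^2\|_{H^\sigma}\;\le\;C(1+\|u_{xx}\|_{L^\infty})\|m\|_{H^\sigma}.$$
Combining these pieces yields the differential inequality
$$\frac{d}{dt}\|m(t)\|_{H^\sigma}\;\le\;C\bigl(1+\|u_{xx}(t)\|_{L^\infty}\bigr)\|m(t)\|_{H^\sigma}.$$

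\medskip
\noindent\emph{Conclusion via Gronwall.} Integrating the above,
$$\|m(t)\|_{H^{s-2}}\;\le\;\|m_0\|_{H^{s-2}}\exp\!\Bigl(C\,T+C\!\int_0^t\!\|u_{xx}(\tau)\|_{L^\infty}\,d\tau\Bigr),\qquad t\in[0,T),$$
which is finite by hypothesis. This uniform bound, together with the local well-posedness of Theorem \ref{Thm1}, allows the solution to be extended past $T$, contradicting the maximality of $T$. The main obstacle I anticipate is the commutator estimate: for $s\in(5/2,7/2]$ one has $\sigma\in(1/2,3/2]$, so $\|m_x\|_{L^\infty}$ is \emph{not} directly controlled by $\|m\|_{H^\sigma}$; a naive application of Kato–Ponce would produce such a term. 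The key point is that the coefficient $4u-2u_x$ is two derivatives smoother than $m$, so the $\|{\cdot}\|_{H^\sigma}$-factor in Kato–Ponce lands on the coefficient (and is absorbed into $\|m\|_{H^\sigma}$), while the $\|{\cdot}\|_{L^\infty}$-factor lands on the coefficient's derivative, producing exactly the blow-up quantity $\|u_{xx}\|_{L^\infty}$.
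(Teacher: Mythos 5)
Your overall strategy (an energy estimate closed by Gronwall under the hypothesis $\int_0^T\|u_{xx}\|_{L^\infty}\,d\tau<\infty$, plus the trivial converse via Sobolev embedding) matches the paper's, but you run the estimate on $m=u-u_{xx}$ in $H^{s-2}$, whereas the paper rewrites the equation in the nonlocal form $u_t-u_{txx}=-4\partial_x^2(uu_x)+2\partial_x(u_xu_{xx})+2\partial_x(u_x^2)+16uu_x$ and performs the estimate on $u$ itself in $H^s$, pairing with $\Lambda^{s-1}u$ and $\Lambda^{s-1}u_x$. This difference is not cosmetic: it is exactly what makes the commutator step work in the paper and fail in your proposal.

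The gap is in your treatment of $[\Lambda^\sigma,4u-2u_x]m_x$ with $\sigma=s-2$. Lemma \ref{com.est} bounds $\|\Lambda^\sigma(fg)-f\Lambda^\sigma g\|_{L^2}$ by
$C\bigl(\|\Lambda^\sigma f\|_{L^2}\|g\|_{L^\infty}+\|f_x\|_{L^\infty}\|\Lambda^{\sigma-1}g\|_{L^2}\bigr)$ with $f=4u-2u_x$ and $g=m_x$. The second term behaves as you describe: the $L^\infty$ norm falls on $f_x=4u_x-2u_{xx}$ and produces the blow-up quantity. But in the \emph{first} term the $L^\infty$ norm falls on $g=m_x=u_x-u_{xxx}$, not on the coefficient, so you are left with $\|4u-2u_x\|_{H^\sigma}\,\|m_x\|_{L^\infty}$. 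For $s\in(5/2,7/2]$ you have $\sigma\le 3/2$, so $\|m_x\|_{L^\infty}$ is controlled neither by $\|m\|_{H^\sigma}$ nor by $\|u_{xx}\|_{L^\infty}$ — it involves a third derivative of $u$ — and your differential inequality does not close. Your closing remark asserts that "the $L^\infty$-factor lands on the coefficient's derivative," which misreads the asymmetric structure of the Kato--Ponce estimate: both terms are present, and one of them carries $\|g\|_{L^\infty}$. The paper sidesteps this entirely because its commutators have $f=u$ or $f=u_x$ and $g=u_x$ or $g=u_{xx}$, so every $L^\infty$ factor that appears is $\|u_x\|_{L^\infty}$ or $\|u_{xx}\|_{L^\infty}$, both admissible. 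To repair your route at low regularity you would need either to revert to the $u$-level estimate or to invoke a transport-type estimate in the spirit of Lemma \ref{est1}/\ref{est0} (valid for $\sigma<1+\frac{d}{p_1}$ with only $\|\partial_x v\|_{B^{1/2}_{2,\infty}\cap L^\infty}$ on the right), rather than the raw Kato--Ponce lemma; as written, the step fails.
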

\begin{proof}
Arguing by density, it suffices to consider the case where $u\in C^{\infty}_0$.
The equation (\ref{E1}) can be rewritten as
\begin{align}\label{21}
u_t-u_{txx}=-4\partial_{x}^2(uu_x)+2\partial_x(u_xu_{xx})+2\partial_{x}(u_x^2)+16uu_{x}.
\end{align}
Set $\Lambda^2=1-\partial_{x}^2$. Then applying $\Lambda^{s-1}u\Lambda^{s-1}$ to (\ref{21}) yields
\begin{align}\label{22}
&\nonumber\frac{d}{dt}\int_{\mathbb{R}}[(\Lambda^{s-1}u)^2+(\Lambda^{s-1}u_x)^2]dx
\\\nonumber=&-8\int_{\mathbb{R}}\Lambda^{s-1}u\Lambda^{s-1}(\partial_{x}^2(uu_x))dx
+4\int_{\mathbb{R}}\Lambda^{s-1}u\Lambda^{s-1}(\partial_x(u_xu_{xx}))dx
\\\nonumber&+4\int_{\mathbb{R}}\Lambda^{s-1}u\Lambda^{s-1}(\partial_{x}(u_x^2))dx
+32\int_{\mathbb{R}}\Lambda^{s-1}u\Lambda^{s-1}(uu_{x})dx
\\\nonumber=&24\int_{\mathbb{R}}\Lambda^{s-1}u\Lambda^{s-1}(uu_x)dx
-8\int_{\mathbb{R}}\Lambda^{s}u\Lambda^{s}(uu_x)dx
\\&-4\int_{\mathbb{R}}\Lambda^{s-1}u_{x}\Lambda^{s-1}(u_{x}u_{xx}))dx
-4\int_{\mathbb{R}}\Lambda^{s-1}u_{x}\Lambda^{s-1}(u^2_x)dx.
\end{align}
We estimate the terms on the right-hand side of (\ref{22}), respectively. Applying the Cauchy-Schwartz inequality, Lemmas \ref{com.est}, Corollary \ref{est3} and Remark \ref{l21}, we derive
\begin{align}
\nonumber\int_{\mathbb{R}}&\Lambda^{s-1}u\Lambda^{s-1}(uu_x)dx
\\\nonumber=&\int_{\mathbb{R}}\Lambda^{s-1}u[\Lambda^{s-1}(uu_x)-u\Lambda^{s-1}u_x]dx
-\frac{1}{2}\int_{\mathbb{R}}u_x(\Lambda^{s-1}u)^2dx\\\nonumber \leq&
C(\|\Lambda^{s-1}u\|_{L^2}\|u_x\|_{L^{\infty}}+\|\Lambda^{s-2}u_x\|_{L^2}\|u_x\|_{L^{\infty}})
\|\Lambda^{s-1}u\|_{L^2}+\|u_x\|_{L^{\infty}}\|\Lambda^{s-1}u\|^2_{L^2}
\\\nonumber \leq& C \|u\|^2_{H^{s-1}}\|u_x\|_{L^{\infty}}
\\\leq& C_{T}\|u\|^2_{H^{s-1}},
\\\nonumber
\int_{\mathbb{R}}&\Lambda^{s-1}u_{x}\Lambda^{s-1}(u_xu_{xx})dx
\\\nonumber=&\int_{\mathbb{R}}\Lambda^{s-1}u_x[\Lambda^{s-1}(u_xu_{xx})-u_x\Lambda^{s-1}u_{xx}]dx
-\frac{1}{2}\int_{\mathbb{R}}u_{xx}(\Lambda^{s-1}u_x)^2dx
\\\nonumber \leq& C(\|u_{xx}\|_{L^\infty}\|\Lambda^{s-1}u_x\|_{L^2}
+\|u_{xx}\|_{L^\infty}\|\Lambda^{s-2}u_{xx}\|_{L^2})\|\Lambda^{s-1}u_x\|_{L^2}
+\|u_{xx}\|_{L^\infty}\|\Lambda^{s-1}u_x\|^2_{L^2}
\\ \leq& C \|u\|^2_{H^{s}}\|u_{xx}\|_{L^{\infty}},
\\\nonumber
\int_{\mathbb{R}}&\Lambda^{s-1}u_x\Lambda^{s-1}(u_{x}^2)dx\\\nonumber\leq& \|\Lambda^{s-1}u_x\|_{L^2}\|\Lambda^{s-1}u_{x}^2\|_{L^2}
\\\nonumber\leq& C\|u_x\|_{H^{s-1}}\|u_{x}^2\|_{H^{s-1}}
\\\nonumber\leq& C\|u\|^2_{H^{s}}\|u_{x}\|_{L^{\infty}}
\\\leq& C_{T}\|u\|^2_{H^{s}}.
\end{align}
By the same token, we get
$$\int_{\mathbb{R}}\Lambda^{s}u\Lambda^{s}(uu_x)dx\leq C \|u\|^2_{H^{s}}\|u_x\|_{L^{\infty}}\leq C_{T} \|u\|^2_{H^{s}}.$$
Combining (\ref{22}) with the above inequalities and Remark \ref{l21}, we have
\begin{align}\label{23}
\nonumber\frac{d}{dt}& \|u\|^2_{H^{s}}\\\nonumber\leq&
C_{T} \|u\|^2_{H^{s}}\|u_{xx}\|_{L^{\infty}}
\\ \leq& C_{T}  \|u\|^2_{H^{s}}\|u_{xx}\|_{L^{\infty}}.
\end{align}
Applying Gronwall's inequality to (\ref{23}), we have
\begin{align}\label{24}
\|u\|^2_{H^{s}}\leq\|u_0\|^2_{H^{s}}e^{C_{T}\int_{0}^{T}\|u_{xx}\|_{L^{\infty}}d\tau}.
\end{align}
If $\|u_{xx}\|_{L^\infty}$ is bounded, from (\ref{24}), we know that $\|u(t,x)\|_{H^s}$ is bounded. By using the Sobolev embedding theorem $H^s(\mathbb{R})\hookrightarrow L^{\infty}(\mathbb{R}),~ s>\frac{1}{2},$ we have
\begin{align}\label{25}
\|u_{xx}\|_{L^\infty} \leq C\|u\|_{H^s},
\end{align}
with $s > \frac{5}{2}$. If $\|u\|_{H^s}$ is bounded, $s > \frac{5}{2},$ from (\ref{25}), we know that $\|u_{xx}\|_{L^\infty}$ are bounded. Moreover, if the maximal existence
time $T < \infty $ satisfies
$$\int^{T}_{0}\|u_{xx}\|_{L^{\infty}}d\tau<\infty,$$
we obtain from (\ref{24})
$$\lim_{t\rightarrow T}\sup_{0\leq \tau \leq t}\|u\|_{H^s}<\infty,$$
which contradicts the assumption that $T < \infty$ is the maximal existence time.
 Thus we have
$$\int^{T}_{0}\|u_{xx}\|_{L^\infty}d\tau=\infty.$$
\end{proof}

 The following main theorem of this section shows that there are some initial data for which the corresponding solutions to (\ref{E1}) with some certain condition will blow up in finite time.

\begin{theo}\label{thm.b1}
Assume that $u_0\in H^s, ~~s>\frac{5}{2}$  and $~u''_{0}(x_0)<-4(54T\|u_{0}\|^2_{H^1}+6\|u_{0}\|_{H^\frac{3}{2}})$, then the corresponding solution of (\ref{E1}) blows up in finite time.
\end{theo}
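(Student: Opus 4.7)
My plan is to apply the method of characteristics to the non-conservative form of the transport equation for $m = u - u_{xx}$ in (\ref{E2}), reducing finite-time blow-up to blow-up of a scalar Riccati-type ODE along a single trajectory, and then invoke the blow-up criterion of Lemma \ref{Thm3}. Concretely, I first define the characteristic flow $q(t;x_0)$ by
\begin{equation*}
\frac{dq}{dt} = -(4u-2u_x)(t,q(t;x_0)), \qquad q(0;x_0) = x_0,
\end{equation*}
which is well-posed on the maximal existence interval $[0,T^\ast)$ since $u$ and $u_x$ are Lipschitz in $x$ at each fixed time. Writing $M(t) := m(t,q(t;x_0))$, the second equation of (\ref{E2}) becomes the Riccati-type ODE
\begin{equation*}
\frac{dM}{dt} = 2M^2 + (8u_x - 4u)(t,q)\, M + 2(u+u_x)^2(t,q).
\end{equation*}

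The next step is to control the coefficients uniformly. From the conservation law (Lemma \ref{Thm2}) together with $H^1 \hookrightarrow L^\infty$, one has $\|u(t)\|_{L^\infty} \leq \|u_0\|_{H^1}$, and Remark \ref{l21} provides the explicit bound $\|u_x(t)\|_{L^\infty} \leq 54T\|u_0\|_{H^1}^2 + 5\|u_0\|_{H^{3/2}}$ on $[0,T]$. Setting $C_0 := 54T\|u_0\|_{H^1}^2 + 6\|u_0\|_{H^{3/2}}$, so that $\|u\|_{L^\infty} \leq C_0/6$ and $\|u_x\|_{L^\infty} \leq C_0$ on $[0,T]$, the hypothesis yields the initial value
\begin{equation*}
M(0) = u_0(x_0) - u_0''(x_0) > -\|u_0\|_{L^\infty} + 4 C_0 \geq \tfrac{23}{6} C_0 > 0.
\end{equation*}
To extract a clean Riccati inequality I would complete the square in $M$:
\begin{equation*}
\frac{dM}{dt} = 2(M + 2u_x - u)^2 + 12 u u_x - 6 u_x^2,
\end{equation*}
and argue that once $M$ dominates every combination of $u$ and $u_x$ on the right-hand side (which $M(0)$ already does under the hypothesis), the ODE collapses to $\frac{dM}{dt} \geq c M^2$ for some positive $c$. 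Standard ODE comparison then forces $M(t) \to +\infty$ at a finite time $T^{\ast\ast}$.

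Finally, since $u_{xx} = u - m$ and $\|u(t)\|_{L^\infty} \leq C_0/6$ is uniformly bounded, one has $|u_{xx}(t,q(t))| \geq M(t) - \|u(t)\|_{L^\infty} \to +\infty$ as $t \uparrow T^{\ast\ast}$, so $\int_0^{T^{\ast\ast}} \|u_{xx}(\tau)\|_{L^\infty}\,d\tau = +\infty$, and Lemma \ref{Thm3} forces the maximal existence time of $u$ to be finite. The main obstacle I expect is quantitative rather than qualitative: the factor $4$ in the hypothesis appears to be essentially sharp, so making $\frac{dM}{dt} \geq c M^2$ hold on the relevant range of $M$ requires carefully exploiting the refined bound $\|u\|_{L^\infty} \leq C_0/6$ (rather than the cruder $\|u\|_{L^\infty} \leq C_0$) and completing the square precisely, instead of relying on any more sophisticated analytic tool.
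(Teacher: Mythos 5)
Your reduction to a Riccati ODE for $M(t)=m(t,q(t;x_0))$ along a single characteristic is a reasonable idea, but the step you leave unverified --- ``once $M$ dominates every combination of $u$ and $u_x$ on the right-hand side (which $M(0)$ already does under the hypothesis)'' --- is precisely where the argument fails, and the constants do not close. Set $C_0:=54T\|u_0\|^2_{H^1}+6\|u_0\|_{H^{3/2}}$, so that $\|u_x\|_{L^\infty}\le C_0$ and $\|u\|_{L^\infty}\le C_0/(6\sqrt2)$ (using the conservation law and $\|u\|^2_{L^\infty}\le\frac12\|u\|^2_{H^1}$). Your completed square reads
\begin{equation*}
\frac{dM}{dt}=2\bigl(M+2u_x-u\bigr)^2+12uu_x-6u_x^2 ,
\end{equation*}
and minimizing the right-hand side over the admissible box $|u_x|\le C_0$, $|u|\le C_0/(6\sqrt2)$ (the minimum is attained at $u_x=-C_0$, $u=+C_0/(6\sqrt2)$) produces a quadratic in $M$ whose larger root is approximately $4.04\,C_0$; only for $M$ above that value is the region forward-invariant and a comparison $\dot M\ge cM^2$ available. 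The hypothesis, however, only gives $M(0)=u_0(x_0)-u_0''(x_0)>4C_0-\|u_0\|_{L^\infty}\ge\bigl(4-\tfrac{1}{6\sqrt2}\bigr)C_0\approx 3.88\,C_0$, which lies strictly inside the interval where your lower bound on $dM/dt$ is negative. (With your cruder bound $\|u\|_{L^\infty}\le C_0/6$ the gap is wider still: $3.83\,C_0$ versus $4.17\,C_0$.) So you cannot exclude that $M$ drifts down into the bounded regime, and finite-time blow-up does not follow from the estimates you have; the factor $4$ in the hypothesis is simply not calibrated to your formulation.

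It is calibrated to the paper's, which differs in a small but decisive way. The paper applies $(1-\partial_x^2)^{-1}$ to the equation, differentiates twice, and obtains an evolution equation for $u_{xx}$ whose quadratic term is $-2u_{xx}^2+8u_xu_{xx}$; absorbing the cross term by Young's inequality into \emph{half} of $-2u_{xx}^2$ leaves the one-sided inequality $\frac{d}{dt}\bigl(\inf_x u_{xx}\bigr)\le-\bigl(\inf_x u_{xx}\bigr)^2+(4C_0)^2$ (via the Constantin--Escher lemma at the spatial infimum, where $u_{xxx}=0$). There the tracked quantity is $w=\inf_x u_{xx}$ itself rather than $u-u_{xx}$, the Riccati coefficient is exactly $-1$, and the threshold is exactly $4C_0$, so $w(0)\le u_0''(x_0)<-4C_0$ launches the blow-up with no slack needed. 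To salvage your route you would either have to strengthen the hypothesis (replace $4$ by a larger constant) or reproduce the paper's cancellation by working with $u_{xx}$ instead of $m$. Your final step (passing from blow-up of $M$ to Lemma \ref{Thm3}) is fine but also unnecessary: $m\in C([0,T);B^s_{p,r})\hookrightarrow C([0,T);L^\infty)$ on the existence interval, so divergence of $M$ at a finite time already contradicts existence beyond it.
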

\begin{proof}
By a standard density argument, here we may assume $s=3$ to prove the theorem.

Note that $G(x)=\frac{1}{2}e^{-|x|}$ and $G(x)\star f =(1-\partial _x^2)^{-1}f$ for all $f \in L^2$ and $G \star m=u$. Then we can rewrite (\ref{E2}) as follows:
\begin{align}\label{051}
u_t-4uu_x=-u^2_{x}+G\star[\partial_x(2u_x^2+6 u^{2})+u^2_{x}].
\end{align}
Differentiating this relation twice with respect to $x$, we find
\begin{align}\label{26}
&\nonumber\partial_tu_{xx}+(2u_x-4u)u_{xxx}\\\nonumber=&-2u^2_{xx}+8u_{x}u_{xx}-12uu_{x}-u_{x}^2
\\&+G\star\{\partial_x(2u_x^2+6 u^{2})+(u^2_{x})\}.
\end{align}
Note that
$$|G_{x}\star(2u_x^2+6 u^{2})|+|G\star(u_x^2)|\leq 5\|u\|_{H^1}^{2}\leq 5\|u_0\|_{H^1}^{2}.$$

Therefore,
\begin{align}\label{32}
\partial_tu_{xx}+(2u_x-4u)u_{xxx}&\leq -u_{xx}^2+15 u_{x}^{2}-12uu_{x}+5\|u_0\|_{H^1}^{2}
\\\nonumber&\leq -u_{xx}^2+16u_{x}^2+36u^2+5\|u_0\|_{H^1}^{2}
\\\nonumber&\leq -u_{xx}^2+16 \|u_{x}\|_{L^\infty}^{2}+23\|u_0\|_{H^1}^{2}
\\\nonumber&\leq -u_{xx}^2+16(54T\|u_{0}\|^2_{H^1}+5\|u_{0}\|_{H^\frac{3}{2}})^2+23\|u_0\|_{H^1}^{2}
\\\nonumber&\leq -u_{xx}^2+16(54T\|u_{0}\|^2_{H^1}+6\|u_{0}\|_{H^\frac{3}{2}})^2
\\\nonumber&= -u_{xx}^2+\mathbf{C_{T}}^2,
\end{align}
where $\mathbf{C_{T}}\triangleq 4(54T\|u_{0}\|^2_{H^1}+6\|u_{0}\|_{H^\frac{3}{2}}).$

Defining now $w(t):=\inf_{x\in \mathbb{R}}[u_{xx}(t, q(t,x))],~~\frac{dq(t,x)}{dt}=2u_x(t,q(t,x))-4u(t,q(t,x))$, we obtain from the above inequality the relation
\begin{align}\label{27}
\frac{dw}{dt}\leq  -w^2+\mathbf{C_{T}}^{2},~~~~t\in(0,T).
\end{align}
Since $w(0)<-\mathbf{C_{T}}$, it then follows that
$$ w(t)<-\mathbf{C_{T}}, ~~~~~~~~~\forall t\in [0,T).
$$
By solving the inequality (\ref{27}), we get
\begin{align}\label{28}
0\leq \frac{2\mathbf{C_{T}}}
{w+\mathbf{C_{T}}}
\leq 1-\frac{w(0)-\mathbf{C_{T}}}
{w(0)+\mathbf{C_{T}}}e^{-2t\mathbf{C_{T}}}.
\end{align}

By (\ref{28}) and the fact
$\frac{w(0)-\mathbf{C_{T}}}{w(0)+\mathbf{C_{T}}}>1$, there exists
$$0<T\leq -\frac{1}{2\mathbf{C_{T}}}
\ln{\frac{w(0)+\mathbf{C_{T}}}{w(0)-\mathbf{C_{T}}}},
$$
such that
\begin{align}\label{29}
w(t)\leq-\mathbf{C_{T}}+\frac{2\mathbf{C_{T}}}
{1-\frac{w(0)+\mathbf{C_{T}}}{w(0)-\mathbf{C_{T}}}e^{-2\mathbf{C_{T}}t}}\rightarrow -\infty,
\end{align}
as $t\rightarrow T$. This proves that the wave $u(t,x)$ breaks in finite time.
\end{proof}

Finally, we prove the exact blow-up rate for blowing-up solutions to (\ref{E1}). We now introduce the following useful lemma.
\begin{lemm}\cite{Constantin.Escher3}
 Let $T > 0$ and $u \in C^1([0, T );H^2)$. Then for every $t \in [0, T )$,
there exists at least one point $\xi(t) \in \mathbb{R}$ with
$$m(t) \triangleq \inf_{x\in\mathbb{R}}
(u_x(t, x)) = u_{x}(t, \xi(t)).$$
The function $m(t)$ is absolutely continuous on $(0, T)$ with
$$\frac{dm}{dt}= u_{tx}(t, \xi(t)) ~~~~a.e. ~on ~~~(0, T ).$$
\end{lemm}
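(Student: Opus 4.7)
The plan is to establish three things in order: the existence of $\xi(t)$ attaining the infimum, the local Lipschitz regularity of $m(t)$ (which gives absolute continuity), and finally the formula for $m'(t)$ at points of differentiability.

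For the first point, I would invoke the Sobolev embedding $H^2(\mathbb{R}) \hookrightarrow C^1_0(\mathbb{R})$, which ensures that for each fixed $t$, the function $x \mapsto u_x(t,x)$ is continuous and tends to $0$ as $|x| \to \infty$. A continuous function vanishing at infinity attains its infimum on $\mathbb{R}$ (provided the infimum is nonpositive, which is automatic since $u_x \to 0$ at infinity); hence the set of minimizers is nonempty, and we select $\xi(t)$ from this set by the axiom of choice.

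For the Lipschitz step, the key identity is the double inequality
\begin{equation*}
u_x(s,\xi(t)) - u_x(t,\xi(t)) \;\geq\; m(s) - m(t) \;\geq\; u_x(s,\xi(s)) - u_x(t,\xi(s)),
\end{equation*}
which follows immediately from the definition of $m$ as an infimum. Since $u \in C^1([0,T);H^2)$ and $H^1 \hookrightarrow L^\infty$, the difference $\|u_x(s,\cdot) - u_x(t,\cdot)\|_{L^\infty} \leq C \|u(s) - u(t)\|_{H^2}$ is controlled by $C|s-t|\sup_{\tau \in [t,s]}\|u_t(\tau)\|_{H^2}$. Thus $|m(s)-m(t)| \leq L|s-t|$ locally, so $m$ is absolutely continuous and (by Rademacher's theorem in one dimension) differentiable almost everywhere.

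To identify $m'(t)$ at a differentiability point $t$, I would use the same one-sided comparison. For $h > 0$, the upper estimate gives
\begin{equation*}
\frac{m(t+h) - m(t)}{h} \;\leq\; \frac{u_x(t+h,\xi(t)) - u_x(t,\xi(t))}{h} \;\xrightarrow[h \to 0^+]{}\; u_{tx}(t,\xi(t)),
\end{equation*}
using $u \in C^1([0,T);H^2)$ and $H^1 \hookrightarrow L^\infty$ to pass the limit through the supremum in $x$. The same inequality with $h < 0$, upon dividing by the negative quantity $h$, flips direction and yields the matching lower bound on the left derivative. At a point where $m'(t)$ exists, left and right derivatives coincide, squeezing the common value to $u_{tx}(t,\xi(t))$.

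The only genuine subtlety is that $\xi(t)$ need not be unique nor continuous in $t$, so one cannot pass to the limit $\xi(t+h) \to \xi(t)$ directly; the argument above sidesteps this by always comparing $m(t+h)$ to the value at a \emph{fixed} minimizer (either $\xi(t)$ or $\xi(s)$), which is the main technical point of the proof. Everything else is routine, and the conclusion matches Lemma 2.3 of Constantin--Escher \cite{Constantin.Escher3}.
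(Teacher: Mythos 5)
The paper does not actually prove this lemma --- it is quoted with a citation to Constantin--Escher \cite{Constantin.Escher3} --- and your argument is precisely the classical one from that reference: the two-sided comparison $u_x(s,\xi(t))-u_x(t,\xi(t)) \ge m(s)-m(t) \ge u_x(s,\xi(s))-u_x(t,\xi(s))$, together with $u\in C^1([0,T);H^2)$ and $H^1\hookrightarrow L^\infty$, is exactly how the local Lipschitz bound and the matching one-sided difference quotients are obtained there, and your handling of the non-uniqueness of $\xi(t)$ by always comparing against a fixed minimizer is the right (and standard) device. One small imprecision in your existence step: a continuous function vanishing at infinity whose infimum is nonpositive need not attain it (e.g.\ $e^{-x^{2}}$ has infimum $0$), so the parenthetical justification as written is not quite sufficient; the correct repair is to note that $u(t,\cdot)\in H^2\hookrightarrow C_0$ forces $\int_{\mathbb{R}}u_x(t,x)\,dx=0$, hence either $u_x(t,\cdot)\equiv 0$ (infimum $0$, attained everywhere) or $u_x(t,\cdot)$ takes strictly negative values, in which case the infimum is negative and is attained on a compact set. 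With that one-line fix the proof is complete and matches the cited source.
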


\begin{theo}\label{thm.b2}
Let $u_0\in H^s,~s>\frac{5}{2}$, and let $T$ be the blow-up time of the corresponding solution $u$ to (\ref{E1}), which is guaranteed by Theorem \ref{thm.b1}. Then
\begin{align}\label{73}
\lim_{t\rightarrow T}(\inf_{x\in\mathbb{R}}[u_{xx}(t,x)](T-t))=-\frac{1}{2}.
\end{align}
\end{theo}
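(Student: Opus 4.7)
The plan is to derive matching upper and lower differential inequalities for $w(t):=\inf_{x\in\mathbb R}u_{xx}(t,x)$ along the characteristic flow, and then integrate them to squeeze $w(t)(T-t)$ between $-\tfrac{1}{2-o(1)}$ and $-\tfrac{1}{2+o(1)}$. I first argue by density and assume the solution is smooth, so that the analogue of the infimum lemma applies to $u_{xx}$: picking $\xi(t)\in\mathbb R$ with $u_{xx}(t,\xi(t))=w(t)$, one has $u_{xxx}(t,\xi(t))=0$, and $w$ is absolutely continuous on $(0,T)$ with
$$\dot w(t)=\big[\partial_tu_{xx}+(2u_x-4u)u_{xxx}\big]\big(t,\xi(t)\big)=\partial_tu_{xx}(t,\xi(t))\quad\text{a.e.}$$

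The second step is to revisit the identity (\ref{26}) already established in the proof of Theorem \ref{thm.b1},
$$\partial_tu_{xx}+(2u_x-4u)u_{xxx}=-2u_{xx}^2+8u_xu_{xx}-12uu_x-u_x^2+G\star\{\partial_x(2u_x^2+6u^2)+u_x^2\},$$
and keep track of \emph{both} sides. In place of the crude bound $8u_xu_{xx}\le u_{xx}^2+16u_x^2$ used in (\ref{32}), I apply for a free parameter $\varepsilon\in(0,1)$ the sharp Young inequality $|8u_xu_{xx}|\le \varepsilon u_{xx}^2+\tfrac{16}{\varepsilon}u_x^2$, and combine it with the $L^\infty$ controls on $u$ and $u_x$ from Lemma \ref{Thm02} and Remark \ref{l21} together with the elementary estimate $\|G\star f\|_{L^\infty},\|G_x\star f\|_{L^\infty}\lesssim \|u\|_{H^1}^2$ already used in the proof of Theorem \ref{thm.b1}. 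This produces a constant $K_\varepsilon=K_\varepsilon(T,\|u_0\|_{H^{3/2}})$ such that
$$-(2+\varepsilon)u_{xx}^2-K_\varepsilon\;\le\;\partial_tu_{xx}+(2u_x-4u)u_{xxx}\;\le\;-(2-\varepsilon)u_{xx}^2+K_\varepsilon.$$

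Evaluating at $x=\xi(t)$ gives the ODE inequalities
$$-(2+\varepsilon)w(t)^2-K_\varepsilon\;\le\;\dot w(t)\;\le\;-(2-\varepsilon)w(t)^2+K_\varepsilon.$$
Since $w(t)\to-\infty$ as $t\to T$ by Theorem \ref{thm.b1}, for each $\varepsilon\in(0,\tfrac12)$ there is $t_\varepsilon<T$ with $w(t)^2\ge K_\varepsilon/\varepsilon$ on $[t_\varepsilon,T)$. Absorbing $K_\varepsilon$ into the leading term yields
$$-(2+2\varepsilon)w(t)^2\;\le\;\dot w(t)\;\le\;-(2-2\varepsilon)w(t)^2,\qquad t\in[t_\varepsilon,T),$$
and, since $\dot w/w^2=-\tfrac{d}{dt}(1/w)$, this is equivalent to $2-2\varepsilon\le\tfrac{d}{dt}(1/w)\le 2+2\varepsilon$. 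Integrating from $t\in[t_\varepsilon,T)$ up to $T$ and using $1/w(s)\to 0^-$ gives $(2-2\varepsilon)(T-t)\le -1/w(t)\le(2+2\varepsilon)(T-t)$, hence $-\tfrac{1}{2-2\varepsilon}\le w(t)(T-t)\le -\tfrac{1}{2+2\varepsilon}$. Letting $\varepsilon\to 0$ delivers (\ref{73}).

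The delicate point is obtaining the matching \emph{lower} bound in the second step: the estimate (\ref{32}) used for the blow-up criterion absorbs constants in a way that only controls $\dot w$ from above, whereas here a symmetric two-sided bound with a free parameter is required. Consequently $K_\varepsilon\nearrow\infty$ as $\varepsilon\to 0$, and the whole argument hinges on the fact that $w(t)^2$ blows up quickly enough to dominate $K_\varepsilon$ on the shrinking interval $[t_\varepsilon,T)$. Carrying this absorption uniformly through the limit $\varepsilon\to 0$ is the main technical obstacle.
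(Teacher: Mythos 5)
Your proposal is correct and follows the same overall strategy as the paper --- a two-sided Riccati-type differential inequality for an infimum quantity, divided by $w^2$ and integrated up to $T$ --- but it differs in the one step that matters, namely how the cross term $8u_xu_{xx}$ in (\ref{26}) is neutralized. The paper does this by an exact completion of the square: it passes to the auxiliary variable $u_{xx}-2u_x$, for which $-2u_{xx}^2+8u_xu_{xx}=-2(u_{xx}-2u_x)^2+8u_x^2$, so that all remaining terms are bounded by an $\varepsilon$-\emph{independent} constant and one obtains the clean inequality $|\frac{dw}{dt}+w^2|\leq\widetilde{\mathbf{C_T}}^2$ of (\ref{30}); the price is a final transfer step, using Remark \ref{l21} and $(T-t)\to 0$, to convert the blow-up rate of $\inf(u_{xx}-2u_x)$ back into that of $\inf u_{xx}$. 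You instead work with $\inf u_{xx}$ directly and split the cross term by Young's inequality, paying with a constant $K_\varepsilon\sim\varepsilon^{-1}$. Both routes then absorb the constant into $\varepsilon w^2$ once $w^2\geq K_\varepsilon/\varepsilon$ (the paper performs exactly this absorption too, in the passage from (\ref{30}) to (\ref{71})), so your approach is equally valid and arguably more direct, at the cost of a slightly messier bookkeeping of $\varepsilon$-dependent constants.

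Two small remarks. First, the ``main technical obstacle'' you flag at the end --- carrying the absorption uniformly through $\varepsilon\to 0$ --- is not actually an obstacle: for each \emph{fixed} $\varepsilon$ you obtain $-\frac{1}{2-2\varepsilon}\leq w(t)(T-t)\leq-\frac{1}{2+2\varepsilon}$ on $[t_\varepsilon,T)$, hence both $\limsup_{t\to T}w(t)(T-t)$ and $\liminf_{t\to T}w(t)(T-t)$ lie in $[-\frac{1}{2-2\varepsilon},-\frac{1}{2+2\varepsilon}]$, and you may let $\varepsilon\to 0$ in this statement about the (already-taken) limits; no uniformity is required. Second, for $s=3$ the quantity $u_{xxx}$ is only in $L^2$, so the assertion $u_{xxx}(t,\xi(t))=0$ at the minimizer is not literally meaningful; it is safer to do as the paper does and define $w$ as the infimum over the characteristic flow $q(t,\cdot)$ of $2u_x-4u$, so that the transport term is absorbed into the material derivative and no pointwise vanishing of $u_{xxx}$ is needed. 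This is a presentational point rather than a gap, since your density reduction covers it.
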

\begin{proof}
As mentioned earlier, we only need to prove the theorem for $s=3.$

Differentiating  (\ref{051}) with respect to $x$ , we find
\begin{align}
\partial_{t}u_x(t,x)+(2u_{x}-4u)u_{xx}&=2u_{x}^2-6u^2+G\star[(2u_x^2+6 u^{2})+\partial_x(u^2_{x})],
\end{align}

which along with (\ref{26}), leads to
\begin{align}\label{052}
\partial_{t}(&u_{xx}-2u_x)(t,x)+(2u_{x}-4u)(u_{xxx}-2u_{xx})\nonumber
\\\nonumber=&-2u_{xx}^2+8u_{x}u_{xx}-5u_{x}^2
+12u^2-12uu_{x}
\\\nonumber&+G\star[(-3u_x^2-12 u^{2})+\partial_x(6 u^{2})],
\\\nonumber=&-2(u_{xx}-2u_{x})^2+3u_{x}^2+12u^2-12uu_{x}
\\&+G\star[(-3u_x^2-12 u^{2})+\partial_x(6 u^{2})].
\end{align}

By (\ref{052}), in view of $|G\star[(-3u_x^2-12 u^{2})+\partial_x(6 u^{2})]|\leq 12\|u_0\|_{H^1}^{2}$, Lemma \ref{Thm02},  $u^2 \leq \frac{1}{2}\|u_{0}\|^2_{H^1}$ and Remark \ref{l21}, we obtain
 \begin{align}
|&\frac{d(u_{xx}-2u_{x})}{dt}+2(u_{xx}-2u_{x})^2|\nonumber
\\\nonumber&\leq 4u_{x}^2+48u^2+12\|u_{0}\|_{H^1}^2
\\\nonumber&\leq 4u_{x}^2+36\|u_{0}\|_{H^1}^2
\\\nonumber&\leq 4(54T\|u_{0}\|^2_{H^1}+5\|u_{0}\|_{H^\frac{3}{2}})^2+36\|u_{0}\|_{H^1}^2
\\\nonumber&\leq 4(54T\|u_{0}\|^2_{H^1}+6\|u_{0}\|_{H^\frac{3}{2}})^2,~~~~\forall t\in(0,T).
\end{align}
Defining now $w(t):=\inf_{x\in \mathbb{R}}[2(u_{xx}-2u_{x})(t, q(t,x))]$, we obtain from the above inequality the relation

\begin{align}\label{30}
|\frac{dw}{dt}+w^2|\leq \mathbf{\widetilde{C_{T}}}^2,~~~~\forall t\in(0,T),
\end{align}
where $\mathbf{\widetilde{C_{T}}}\triangleq 2\sqrt{2}(54T\|u_{0}\|^2_{H^1}+6\|u_{0}\|_{H^\frac{3}{2}}).$

For every $\varepsilon\in(0, \frac{1}{2})$, in view of (\ref{29}), we can find a $t_0\in(0,T)$ such that
$$w(t_0)<-\sqrt{\mathbf{\widetilde{C_{T}}}^2+\frac{\mathbf{\widetilde{C_{T}}}^2}{\varepsilon}}
<-\mathbf{\widetilde{C_{T}}}.
$$
Thanks to (\ref{29}) and (\ref{30}), we have $w(t)<-\mathbf{\widetilde{C_{T}}}.$ This implies that $w(t)$ is decreasing on $[t_0,T),$ hence,
$$w(t)<-\sqrt{\mathbf{\widetilde{C_{T}}}^2+\frac{\mathbf{\widetilde{C_{T}}}^2}{\varepsilon}}
<-\sqrt{\frac{\mathbf{\widetilde{C_{T}}}^2}{\varepsilon}},~~~~\forall t\in[t_0,T).
$$
Noticing that $
-w^2+\mathbf{\widetilde{C_{T}}}^2 \leq \frac{dw(t)}{dt}\leq -w^2+ \mathbf{\widetilde{C_{T}}}^2,~~~a.e.~t\in(t_0,T),
$
we get
\begin{align}\label{71}
-1-\varepsilon\leq \frac{d}{dt}(-\frac{1}{w(t)}) \leq -1+\varepsilon,~~~a.e.~t\in(t_0,T).
\end{align}
Integrating (\ref{71}) with respect to $t\in[t_0, T)$ on $(t,T)$ and applying $\lim_{t\rightarrow T}w(t)=-\infty$ again, we deduce that
\begin{align}\label{72}
(-1-\varepsilon)(T-t)\leq\frac{1}{w(t)}\leq(-1+\varepsilon)(T-t).
\end{align}
Since $\varepsilon\in(0,\frac{1}{2})$ is arbitrary, it then follows from (\ref{72}) that (\ref{73}) holds. Noting that Remark \ref{l21}, we get $\lim_{t\rightarrow T}(\inf_{x\in\mathbb{R}}u_{x}(t,x)(T-t))=0.$

This completes the proof of the theorem.
\end{proof}

\section{Peakon solutions}
In this section, we show that (\ref{E1}) possesses the following peakon solutions
\begin{align}\label{l41}
u(t,x)=\left \{\begin {array}{ll}-\frac{c}{6}e^{ct-x},
\ \ \ \ \ \ \ &x\geq ct,\\-\frac{c}{2}e^{x-ct}+\frac{c}{3}e^{2x-2ct}, \ \ \ \ \ \ & x< ct,
\end {array}\right.
\end{align}

where $c$ is any real positive constant. Of course the above solutions are not classical solutions but weak solutions. We first introduced the definition of weak solutions for (\ref{E1}).
\begin{defi}
Functions $u$ was called weak solutions for (\ref{E1}) if for each $\varphi(t,x)\in C^1([0,T]; C^\infty_0(\mathbb{R}))$, we have
\begin{align}
\nonumber&\int^T_0\int_{\mathbb{R}}\bigg((\varphi_{t}-\varphi_{txx})u -\partial_{x}(2+\partial_{x})\varphi[(2-\partial_{x})u]^2 \bigg)dxdt \\&=\int_{\mathbb{R}}(u(T)-u_{xx}(T))\varphi(T)dx-\int_{\mathbb{R}}(u_{0}(x)-u_{0xx})\varphi(0)dx.
\end{align}
\end{defi}
Directly calculating, we deduce that

\begin{align}\label{050}
(2-\partial_{x})u(t,x)=\left \{\begin {array}{ll}-\frac{c}{2}e^{ct-x},\ \ \ \ \ \ \ &x> ct,\\-\frac{c}{2}e^{x-ct}, \ \ \ \ \ \ & x< ct. \end {array}\right.
\end{align}

Hence, for any $\varphi(t,x)\in C^1([0,T]; C^\infty_0(\mathbb{R}))$, using integration by parts, we obtain
\begin{align}\label{l43}
\nonumber\int_{\mathbb{R}}u\varphi_{t}\mathrm dx=&\int_{-\infty}^{ct}(-\frac{c}{2}e^{x-ct}+\frac{c}{3}e^{2(x-ct)})
\varphi_{t}\mathrm dx+\int_{ct}^{\infty} (\frac{-c}{6}e^{ct-x}\varphi_{t})\mathrm dx
\\\nonumber=&\int_{-\infty}^{ct}\bigg\{\partial_{t}[(-\frac{c}{2}e^{x-ct}+\frac{c}{3}e^{2(x-ct)})\varphi]
-(\frac{c^2}{2}e^{x-ct}-\frac{2c^2}{3}e^{2(x-ct)})\varphi\bigg\} \mathrm dx
\\\nonumber&+\int^{-\infty}_{ct}\bigg\{\partial_{t}(\frac{-c}{6}e^{ct-x}\varphi)
+(\frac{c^2}{6}e^{x-ct})\varphi\bigg\} \mathrm dx
\\\nonumber=&\int_{-\infty}^{ct}\partial_{t}[(-\frac{c}{2}e^{x-ct}+\frac{c}{3}e^{2(x-ct)})\varphi] \mathrm dx
+\int_{-\infty}^{ct}(-\frac{c^2}{2}e^{x-ct}+\frac{2c^2}{3}e^{2(x-ct)})\varphi \mathrm dx
\\&+\int^{-\infty}_{ct} \partial_{t}(\frac{-c}{6}e^{ct-x}\varphi) \mathrm dx
+\int^{-\infty}_{ct}(\frac{c^2}{6}e^{x-ct})\varphi\mathrm dx.
\end{align}

Taking advantage of integration by parts and the fact that (\ref{050}), we get

\begin{align}\label{l44}
\nonumber\int_{\mathbb{R}}\partial_x&(2+\partial_x)\varphi[(2-\partial_x)u]^2 dx
\\\nonumber=&\int^{ct}_{-\infty} \partial_x(2-\partial_x)\varphi\frac{c^2}{4}e^{2(x-ct)} dx
+\int^{\infty}_{ct}\partial_x(2-\partial_x)\varphi\frac{c^2}{4}e^{2(ct-x)} \mathrm dx
\\\nonumber=&\frac{c^2}{4}\int^{ct}_{-\infty} \bigg\{\partial_x [(2-\partial_x)\varphi e^{2(x-ct)}]-2(2-\partial_x)\varphi e^{2(x-ct)} \bigg\}\mathrm dx
\\\nonumber&+\frac{c^2}{4}\int^{\infty}_{ct}\bigg\{\partial_x[(2-\partial_x)\varphi e^{2(ct-x)}]+2(2-\partial_x)\varphi e^{2(ct-x)}\bigg\}\mathrm dx
\\\nonumber=&\frac{c^2}{4}\int^{ct}_{-\infty} (2\partial_x\varphi-4\varphi)\varphi e^{2(x-ct)}\mathrm dx
+\frac{c^2}{4}\int^{\infty}_{ct}(4\varphi-2\varphi_{x}) e^{2(ct-x)} \mathrm dx
\\\nonumber=&-c^2\int^{ct}_{-\infty} \varphi e^{2(x-ct)}\mathrm dx+\frac{c^2}{2}\int^{ct}_{-\infty} [\partial_x(\varphi e^{2(x-ct)})-2\varphi e^{2(x-ct)} ]\mathrm dx
\\\nonumber&+c^2\int^{\infty}_{ct}\varphi e^{2(ct-x)} \mathrm dx-\frac{c^2}{2}\int^{\infty}_{ct}[\partial_{x}(\varphi e^{2(ct-x)})+2\varphi e^{2(ct-x)}]\mathrm dx
\\\nonumber=&\frac{c^2}{4}[4\varphi (ct)-\varphi_{x}(ct)]-2c^2\int^{Ct}_{-\infty} \varphi e^{2(x-ct)}\mathrm dx
+\frac{c^2}{4}\varphi_{x} (ct)\mathrm dx
\\=&c^2\varphi(ct)-2c^2\int^{ct}_{-\infty} \varphi e^{2(x-ct)}\mathrm dx.
\end{align}

By (\ref{l41}), we obtain

\begin{align}\label{l42}
\nonumber-\int_{\mathbb{R}}u\varphi_{txx}=&-\int_{-\infty}^{ct}(-\frac{c}{2}e^{x-ct}+\frac{c}{3}e^{2(x-ct)})
\varphi_{txx}\mathrm dx-\int_{ct}^{\infty} (\frac{-c}{6}e^{ct-x}\varphi_{txx})\mathrm dx
\\\nonumber=&\int_{-\infty}^{ct}\frac{c}{2}e^{x-ct}\varphi_{txx}\mathrm dx-\int_{-\infty}^{ct}\frac{c}{3}e^{2(x-ct)}
\varphi_{txx}\mathrm dx+\int_{ct}^{\infty} (\frac{c}{6}e^{ct-x}\varphi_{txx})\mathrm dx
\\=&\sum_{i=1}^{3}I_{i},
\end{align}
where
\begin{align}
&I_{1}=\int_{-\infty}^{ct}\frac{c}{2}e^{x-ct}\varphi_{txx}\mathrm dx,\nonumber\\ \nonumber
&I_{2}=-\int_{-\infty}^{ct}\frac{c}{3}e^{2(x-ct)} \varphi_{txx}\mathrm dx,\\ \nonumber
&I_{3}=\int_{ct}^{\infty} (\frac{c}{6}e^{ct-x}\varphi_{txx})\mathrm dx.
\end{align}

An application of integration by parts, yields
\begin{align}
\nonumber I_{1}=&\frac{c}{2}\int_{-\infty}^{ct}e^{x-ct}\varphi_{txx}\mathrm dx
\\\nonumber=&\frac{c}{2}\int_{-\infty}^{ct}[\partial_{x}(e^{x-ct}\varphi_{tx})-e^{x-ct}\varphi_{tx}]\mathrm dx
\\\nonumber=&\frac{c}{2}\varphi_{tx}(ct)-
\frac{c}{2}\int_{-\infty}^{ct}[\partial_{x}(e^{x-ct}\varphi_{t})-e^{x-ct}\varphi_{t}]\mathrm dx
\\\nonumber=&\frac{c}{2}\varphi_{tx}(ct)-\frac{c}{2}\varphi_{t}(ct)+
\frac{c}{2}\int_{-\infty}^{ct}e^{x-ct}\varphi_{t}\mathrm dx
\\=&\frac{c}{2}\varphi_{tx}(ct)-\frac{c}{2}\varphi_{t}(ct)
+\frac{c}{2}\int_{-\infty}^{ct}\partial_{t}(e^{x-ct}\varphi)\mathrm dx
+\frac{c^2}{2}\int_{-\infty}^{ct}e^{x-ct}\varphi\mathrm dx,
\\\nonumber I_{2}=&-\frac{c}{3}\int_{-\infty}^{ct}e^{2(x-ct)} \varphi_{txx}\mathrm dx
\\\nonumber=&-\frac{c}{3}\int_{-\infty}^{ct}[\partial_{x}(e^{2(x-ct)} \varphi_{tx})-2e^{2(x-ct)} \varphi_{tx}]\mathrm dx
\\\nonumber=&-\frac{c}{3}\varphi_{tx}(ct)+\frac{2c}{3}\int_{-\infty}^{ct}[\partial_{x}(e^{2(x-ct)} \varphi_{t})-2e^{2(x-ct)} \varphi_{t}]\mathrm dx
\\\nonumber=&-\frac{c}{3}\varphi_{tx}(ct)+\frac{2c}{3}\varphi_{t}(ct)-\frac{4c}{3}\int_{-\infty}^{ct}e^{2(x-ct)} \varphi_{t}\mathrm dx
\\=&-\frac{c}{3}\varphi_{tx}(ct)+\frac{2c}{3}\varphi_{t}(ct)
-\frac{4c}{3}\int_{-\infty}^{ct}\partial_{t}(e^{2(x-ct)} \varphi)\mathrm dx
-\frac{8c^2}{3}\int_{-\infty}^{ct}(e^{2(x-ct)} \varphi)\mathrm dx,
\\\nonumber I_{3}=&\frac{c}{6}\int_{ct}^{\infty} (e^{ct-x}\varphi_{txx})\mathrm dx
\\\nonumber=&\frac{c}{6}\int_{ct}^{\infty} [\partial_{x}(e^{ct-x}\varphi_{tx})+e^{ct-x}\varphi_{tx}]\mathrm dx
\\\nonumber=&-\frac{c}{6}\varphi_{tx}(ct)
+\frac{c}{6}\int_{ct}^{\infty} \partial_{x}(e^{ct-x}\varphi_{t})+e^{ct-x}\varphi_{t}]\mathrm dx
\\\nonumber=&-\frac{c}{6}\varphi_{tx}(ct)-\frac{c}{6}\varphi_{t}(ct)+\frac{c}{6}\int_{ct}^{\infty} e^{ct-x}\varphi_{t}\mathrm dx
\\\nonumber=&-\frac{c}{6}\varphi_{tx}(ct)-\frac{c}{6}\varphi_{t}(ct)+\frac{c}{6}\int_{ct}^{\infty} \partial_{t}(e^{ct-x}\varphi)\mathrm dx-\frac{c^2}{6}\int_{ct}^{\infty}e^{ct-x}\varphi\mathrm dx.
\end{align}

Combining the above three equalities with (\ref{l42}), we have
\begin{align}
-\int_{\mathbb{R}}u\varphi_{txx}=&\frac{c}{2}\int_{-\infty}^{ct}\partial_{t}(e^{x-ct}\varphi)\mathrm dx
+\frac{c^2}{2}\int_{-\infty}^{ct}e^{x-ct}\varphi\mathrm dx\\\nonumber&-\frac{4c}{3}\int_{-\infty}^{ct}\partial_{t}(e^{2(x-ct)} \varphi)\mathrm dx
-\frac{8c^2}{3}\int_{-\infty}^{ct}(e^{2(x-ct)} \varphi)\mathrm dx\\\nonumber&+\frac{c}{6}\int_{ct}^{\infty} \partial_{t}(e^{ct-x}\varphi)\mathrm dx-\frac{c^2}{6}\int_{ct}^{\infty}e^{ct-x}\varphi\mathrm dx,
\end{align}
which along with (\ref{l43}) and (\ref{l44}), implies
\begin{align}\label{l45}
\nonumber\int_{\mathbb{R}}&\bigg((\varphi_{t}-\varphi_{txx})u
 -\partial_{x}(2+\partial_{x})\varphi[(2-\partial_{x})u]^2 \bigg)dx
\\\nonumber&=-c\int_{-\infty}^{ct}\partial_{t}(e^{2(x-ct)}\varphi)\mathrm dx-c^2\varphi(ct)
\\\nonumber&=-c\partial_{t}\int_{-\infty}^{ct}(e^{2(x-ct)}\varphi)\mathrm dx
\\&\nonumber=\partial_{t}\int_{-\infty}^{ct}(u-u_{xx})\varphi\mathrm dx
\\&=\partial_{t}\int_{\mathbb{R}}(u-u_{xx})\varphi\mathrm dx.
\end{align}

Integrating (\ref{l45}) over $[0,T]$ with respect to $t$ and using integration by parts, we get

\begin{align}
\nonumber&\int^T_0\int_{\mathbb{R}}\bigg((u-u_{xx})\varphi_t -\partial_{x}(2-\partial_{x})\varphi[(2-\partial_{x})u]^2 \bigg)dxdt \\&=\int_{\mathbb{R}}(u(T)-u_{xx}(T))\varphi(T)dx-\int_{\mathbb{R}}(u_{0}(x)-u_{0xx})\varphi(0)dx.
\end{align}
Thus we verify that (\ref{l41}) are weak solutions of (\ref{E1}).\\

\noindent\textbf{Acknowledgements.} This work was
partially supported by NNSFC (No.11271382), RFDP (No.
20120171110014), the Macao Science and Technology Development Fund (No. 098/2013/A3) and the key project of Sun Yat-sen University. The authors thank the referees for their valuable comments
and suggestions.

\phantomsection
\addcontentsline{toc}{section}{\refname}


\begin{thebibliography}{99}
\small

\bibitem{B.C.D} H. Bahouri, J. Y. Chemin and R. Danchin, \textit{Fourier analysis and nonlinear partial differential equations}, {Grundlehren der Mathematischen Wissenschaften}, 343, Springer, Heidelberg (2011).

\bibitem{C-K}
G. M. Coclite and K. H. Karlsen, On the well-posedness of the
Degasperis-Procesi equation, {\it J. Func. Anal.}, {\bf 233} (2006),
60--91.

\bibitem{Bressan.Constantin}A. Bressan and A. Constantin, \textit{Global conservative solutions of the Camassa-Holm equation}, {Archive for Rational Mechanics and Analysis,} {\bf183} (2007), 215-239.

\bibitem{Bressan.Constantin2} A. Bressan and A. Constantin, \textit{Global dissipative solutions of the Camassa-Holm equation}, {Analysis and Applications}, {\bf5} (2007), 1-27.

\bibitem{Camassa}R. Camassa and D. D. Holm, \textit{An integrable shallow water equation with peaked solitons}, {Physical Review Letters,} {\bf71}  (1993), 1661-1664.

\bibitem{Camassa.Hyman} R. Camassa, D. Holm and J. Hyman, \textit{A new integrable shallow water
equation}, {Advances in Applied Mechanics,} {\bf31} (1994), 1-33.

\bibitem{Constantin-E} A. Constantin, \textit{The Hamiltonian structure of the Camassa-Holm equation}, {Expositiones Mathematicae}, {\bf 15(1)} (1997), 53-85.

\bibitem{Constantin-P} A. Constantin, \textit{On the scattering problem for the Camassa-Holm equation}, {Proceedings of The Royal Society of London, Series A}, {\bf 457} (2001), 953-970.

\bibitem{Constantin} A. Constantin, \textit{Global existence of solutions and breaking waves for a shallow
water equation: a geometric approach}, Annales de l'Institut Fourier (Grenoble),
{\bf50} (2000), 321-362.

\bibitem{Constantin2} A. Constantin, \textit{The trajectories of particles in Stokes waves}, {Inventiones Mathematicae}, {\bf166} (2006), 523-535.

\bibitem{Constantin.Escher} A. Constantin and J. Escher, \textit{Global existence and blow-up for a shallow water equation}, {Annali della Scuola Normale Superiore di Pisa - Classe di Scienze},  {\bf26} (1998), 303-328.

\bibitem{Constantin.Escher2} A. Constantin and J. Escher, \textit{Well-posedness, global existence, and
blowup phenomena for a periodic quasi-linear hyperbolic equation}, {Communications on Pure and Applied Mathematics}, {\bf51} (1998), 475-504.

\bibitem{Constantin.Escher3} A. Constantin and J. Escher, \textit{Wave breaking for nonlinear nonlocal shallow water equations}, {Acta Mathematica}, {\bf181} (1998), 229-243.

\bibitem{Constantin.Escher4} A. Constantin and J. Escher, \textit{Particle trajectories in solitary water waves}, {Bulletin of the American Mathematical Society}, {\bf44} (2007), 423-431.

\bibitem{Constantin.Escher5} A. Constantin and J. Escher, \textit{Analyticity of periodic traveling free surface water waves with vorticity}, {Annals of Mathematics}, {\bf173} (2011), 559-568.

\bibitem{Constantin.Lannes} A. Constantin and D. Lannes, \textit{The hydrodynamical relevance of the
Camassa-Holm and Degasperis-Procesi equations}, {Archive for Rational Mechanics and Analysis}, {\bf192} (2009) 165-186.

\bibitem{Constantin.mckean} A. Constantin and H. P. McKean, \textit{A shallow water equation on the circle}, {Comm. Pure Appl. Math.}, {\bf55} (1999), 949--982.

\bibitem{Constantin.Molinet} A. Constantin and L. Molinet, \textit{Global weak solutions for a shallow water equation}, {Communications in Mathematical Physics}, {\bf211} (2000), 45-61.

\bibitem{Constantin.lvanov.lenells} A. Constantin, R. I. Ivanov and J. Lenells, \textit{Inverse scatterering transform for the Degasperis-Procesi equation}, {Nonlinearity}, {\bf23} (2010), 2559--2575.

\bibitem{Constantin.Strauss}  A. Constantin and W. A. Strauss, \textit{Stability of peakons}, {Communications on Pure and Applied Mathematics}, {\bf53} (2000), 603-610.

\bibitem{d1}R. Danchin, \textit{A few remarks on the Camassa-Holm equation}, {Differential Integral Equations}, {\bf14} (2001), 953-988.

\bibitem{D-H-H}
A. Degasperis, D. D. Holm, and A. N. W. Hone, {\it A new integral
equation with peakon solutions}, {Theor. Math. Phys.}, {\bf 133}
(2002), 1463--1474.

\bibitem{D-P}
A. Degasperis and M. Procesi, {\it Asymptotic integrability, in: Symmetry
and Perturbation Theory}, A. Degasperis and G. Gaeta, eds., World
Scientific (1999), 23-37.

\bibitem{D-G-H}
H. R. Dullin, G. A. Gottwald, and D. D. Holm, {\it On asymptotically
equivalent shallow water wave equations}, {Phys. D}, {\bf 190}
(2004), 1--14.

\bibitem{E-L-Y1}
J. Escher, Y. Liu and Z. Yin, {\it Global weak solutions and blow-up structure for the Degasperis-Procesi  equation}, {J.
Funct. Anal.}, {\bf 241} (2006), 457--485.

\bibitem{E-L-Y}
J. Escher, Y. Liu and Z. Yin, {\it Shock waves and blow-up phenomena for the periodic Degasperis-Procesi  equation}, {Indiana Univ. Math. J.}, {\bf 56} (2007), 87--177.

\bibitem{Fokas} A. Fokas and B. Fuchssteiner, \textit{Symplectic structures, their B\"{a}cklund transformation and hereditary symmetries}, {Physica D}, {\bf 4(1)} (1981/82), 47--66.

\bibitem{G-L} G. Gui and Y. Liu, \textit{On the Cauchy problem for the Degasperis-Procesi equation}, {Quart. Appl. Math.}, {\bf 69} (2011), 445-464.

\bibitem{H-H} A. A. Himonas and C. Holliman, \textit{The Cauchy problem for the Novikov equation}, {Nonlinearity}, {\bf 25} (2012), 449-479.

\bibitem{Hone}A. N. W. Hone and J. Wang, \textit{Integrable peakon equations with cubic nonlinearity}, {Journal of Physics A: Mathematical and Theoretical}, {\bf41} {2008}, 372002, 10pp.

\bibitem{K.P} T. Kato and G. Ponce, \textit{Commutator estimates and the Euler and Navier-Stokes equation}, {Comm. Pure Appl. Math.}, {\bf44} (1988), 891-907.



\bibitem{Laishaoyong}S. Lai, \textit{Global weak solutions to the Novikov equation}, {Journal of Functional Analysis}, {\bf265} (2013), 520--544.

\bibitem{Le}
J. Lenells, {\it Traveling wave solutions of the Degasperis-Procesi
equation}, {J. Math. Anal. Appl.}, {\bf 306} (2005), 72--82.

\bibitem{L-Y1}
Y. Liu and Z. Yin, {\it Global Existence and Blow-up Phenomena for the
Degasperis-Procesi Equation}, {Commun. Math. Phys.}, {\bf 267} (2006), 801--820.

\bibitem{L-Y2} Y. Liu and Z. Yin,  \textit{On the blow-up phenomena for the Degasperis-Procesi equation}, {Int. Math. Res. Not. IMRN}, {\bf 23} (2007), rnm117, 22 pp.

\bibitem{Lu}
H. Lundmark, {\it Formation and dynamics of shock waves in the
Degasperis-Procesi equation}, {J. Nonlinear. Sci.}, {\bf 17}
(2007), 169--198.

\bibitem{Lu-S}
H. Lundmark and J. Szmigielski, {\it Multi-peakon solutions of the
Degasperis-Procesi equation}, {Inverse Prob.}, {\bf 19} (2003), 1241--1245.

\bibitem{Luo-Yin} W. Luo and Z. Yin, \textit{Local well-posedness and blow-up criteria for a two-component Novikov system in the critical Besov space}, {Nonlinear Analysis. Theory, Methods Applications}, {\bf122} (2015), 1--22.


\bibitem{n1}V. Novikov, \textit{Generalization of the Camassa-Holm equation}, {J. Phys. A}, {\bf 42} (2009), 342002, 14 pp.

\bibitem{Guillermo} G. Rodr\'{i}guez-Blanco, \textit{On the Cauchy problem for the Camassa-Holm equation},
{Nonlinear Analysis. Theory Methods Application}, {\bf46} (2001), 309-327.

 \bibitem{Toland}  J. F. Toland,  \textit{Stokes waves}, Topological Methods in Nonlinear Analysis, {\bf7} (1996), 1-48.

\bibitem{V-P}
V. O. Vakhnenko and E. J. Parkes,  \textit{Periodic and solitary-wave
solutions of the Degasperis-Procesi equation}, {Chaos Solitons
Fractals}, {\bf 20} (2004), 1059--1073.

\bibitem{Wu.Yin} X. Wu and Z. Yin, \textit{Global weak solutions for the Novikov equation}, {Journal of Physics A: Mathematical and Theoretical}, {\bf44} (2011), 055202, 17pp.

\bibitem{Wu.Yin2}X. Wu and Z. Yin, \textit{Well-posedness and global existence for the Novikov equation}, {Annali della Scuola Normale Superiore di Pisa. Classe di Scienze. Serie V}, {\bf11} (2012), 707-727.

\bibitem{Wu.Yin3}X. Wu and Z. Yin, \textit{A note on the Cauchy problem of the Novikov equation}, {Applicable Analysis}, {\bf92} (2013), 1116--1137.

\bibitem{Xin.Z.P}Z. Xin and P. Zhang, \textit{On the weak solutions to a shallow water equation}, {Communications on Pure and Applied Mathematics}, {\bf53} (2000), 1411--1433.

\bibitem{Wei.Yan}W. Yan, Y. Li and Y. Zhang, \textit{The Cauchy problem for the integrable Novikov equation}, {Journal of Differential Equations}, {\bf253} (2012), 298--318.

\bibitem{Wei.Yan2}W. Yan, Y. Li and Y. Zhang, \textit{The Cauchy problem for the Novikov equation}, {Nonlinear Differential Equations and Applications NoDEA}, {\bf20} (2013), 1157--1169.

\bibitem{y1}Z.  Yin, \textit{On the Cauchy problem for an integrable equationwith peakon solutions}, {Ill. J. Math.}, {\bf 47} (2003), 649--666.

\bibitem{y2} Z. Yin, \textit{ Global existence for a new periodic integrable equation}, {J. Math. Anal. Appl.}, {\bf 283} (2003), 129--139.

\bibitem{y3} Z. Yin, \textit{ Global weak solutions to a new periodic integrable equation with peakon solutions}, {J. Funct. Anal.}, {\bf 212} (2004), 182--194.

\bibitem{y4} Z. Yin, \textit{ Global solutions to a new integrable equation with peakons}, {Indiana Univ. Math. J.}, {\bf 53} (2004), 1189--1210.

\end{thebibliography}
\end{document}